\newcommand{\bj}{\bar{j}}
\newcommand{\bl}{\bar{l}}
\newcommand{\al}{\alpha}
\newcommand{\bal}{\bar{\alpha}}
\newcommand{\be}{\beta}
\newcommand{\bbe}{\bar{\beta}}
\newcommand{\ga}{\gamma}
\newcommand{\De}{\Delta}
\newcommand{\de}{\delta}
\newcommand{\bde}{\bar{\delta}}
\newcommand{\eps}{\epsilon}
\newcommand{\ka}{\kappa}
\newcommand{\bla}{\bar{\lambda}}
\newcommand{\m}{\mu}
\newcommand{\n}{\nu}
\newcommand{\bn}{\bar{\nu}}
\newcommand{\tta}{\theta}
\newcommand{\si}{\sigma}
\newcommand{\ta}{\tau}
\newcommand{\ro}{\varrho}
\newcommand{\fa}{\mathfrak{a}}
\newcommand{\fc}{\mathfrak{c}}
\newcommand{\fH}{\mathfrak{H}}
\newcommand{\fraki}{\mathfrak{i}}
\newcommand{\fK}{\mathfrak{K}}
\newcommand{\fw}{\mathfrak{w}}
\newcommand{\fz}{\mathfrak{z}}
\newcommand{\bfz}{\bar{\mathfrak{z}}}
\newcommand{\calO}{\mathcal{O}}
\newcommand{\calS}{\mathcal{S}}
\newcommand{\om}{\omega}
\newcommand{\bbC}{\mathbb{C}}
\newcommand{\bbR}{\mathbb{R}}
\newcommand{\bbN}{\mathbb{N}}
\newcommand{\tr}{\operatorname{tr}}
\newcommand{\del}{\operatorname{\partial}}
\newcommand{\wge}{\wedge}
\theoremstyle{plain}
\newtheorem{theorem}{Theorem}[section]
\newtheorem{lemma}[theorem]{Lemma}
\newtheorem{proposition}[theorem]{Proposition}
\theoremstyle{definition}
\newtheorem{remark}[theorem]{Remark}
\newtheorem{definition}[theorem]{Definition}
\numberwithin{equation}{section}
\author[Aleyasin]{S. Ali Aleyasin}
\address{Dept. Of Pure Mathematics- Faculty of Mathematics\\
University of Waterloo- 200- University Ave. W. \\
N2L 3G1, Canada
 }
\email{aaleyasin@uwaterloo.ca}
\thanks{Most of this work was compiled when the author was a CIRGET post-doc and visiting IPM in Tehran in 2014-15.
I am grateful to Prof. Xiu-Xiong Chen for introducing me to  this circle of problems and for his constant encouragement. 
I should like to thank Professors Apostolov, P-F. Guan, S: Lu, and Rochon with whom I have had various discussions during the 
completion of the present work.}
\title[Calabi problem and edge-cone metrics]{Calabi problem for manifolds with edge-cone singularities}
\begin{document}

\begin{abstract}
In this note, we propose a new approach to  solving the Calabi problem on manifolds with edge-cone singularities of 
prescribed angles along complex hypersurfaces.
It is shown how the classical approach of Aubin-Yau in derving {\it a priori} estimates for the complex hessian can be made to 
work via adopting a \emph{good reference metric} and studying equivalent equations with different referrence metrics.
This further allows extending much of the methods used in the smooth setting to the edge setting.
These results generalise to the case of multiple hypersufaces with possibly normal crossing.

53C55, 
35J60 

\end{abstract}

\maketitle

\section{Introduction}


The study of problems around cone-edge singularities, in particular, the problems around finding K\"ahler-Einstein metrics prescribed edge-cone behaviour,
 has received quite a bit of attention in the past years. 
One reason is the key r\^{o}le such metrics play  in the approach taken by Chen, Donalsdon, and Sun in \cite{ch-do-su-1, ch-do-su-2, ch-do-su-3}, (and also other attempts for solving the same problem in \cite{ti}), in proving the relation between \(K\)-stability and the existence of K\"ahler-Einstein metrics on Fano manifolds. 
In the present work we shall show how one can use the geometry of  edge-cone manifolds by constructing edge metrics with curvature bounded from below to obtain the estimates needed for solving the Calabi problem.
Other than finding K\"ahler-Einstein metrics, this approach allows prescribing a wide class of Ricci forms.
Since in most constructions and proofs it is straightforward to see how they should be modified for the case of divisors with possibly normal crossing, in the rest of this work, in order to keep the statements and proofs clearer, we confine ourselves to the case of one smooth hypersurface.

By K\"ahler metrics with \emph{edge} or \emph{edge-cone} singularities we mean a K\"ahler metric with conical singularity along a complex hypersurface, that is, a metric which asymptotically resembles a cone on \(\bbC\) of total angle \(2 \pi \ta\) in the directions normal to the hypersurface, and  is smooth in the tangential directions.
Examples of such metrics were already known as they arise as orbifold metrics.
More generally, one may construct such metrics as follows. Let \((M^n,\om_0)\) K\"ahler manifolds, where \(\om_0\) is smooth. 
Assume that \(D^{n-1} \subset M^n\) is a complex hypersurface and that \(s\) is a holomorpic section of an hermitian line bundle \((L,h)\) which vanishes of order zero along \(D\). Then, the following metric 
\begin{equation}
\label{cone-reference}
	\om_\ta := \om_0 +\fa dd^c |s|_h^{2\ta}
\end{equation}
is an edge-cone metric along \(D\) of angle \(\ta\) when \(0<\fa \ll 1\).
This statement, along with the rest of results can be generalised to the case wherein \(D\) consistes of a union of irreducible divisors, \(D_j\), with at most normal crossing. 
Indeed, to the best of the author's knowledge, in the works prior to the present work the metric \(\om_\ta\), thus defined,  has been taken as the conical background metric in order to do the analysis.

Various approaches to the study of the Calabi problem on manifolds with edge-cone singularities have proved effective. 
Ricci-flat edge-cone K\"ahler metrics  were proved to exist under suitable topological conditions by Brendle in \cite{br} provided that \(\ta \leq {1 \over 2}\).
There, the classical method used by Aubin and Yau in \cite{au,ya} was used.
The fact that the classical calculations can be adapted to this situation   is due to the fact that the reference metric \(\om_\ta\) has bounded curvature away from the divisor \(D\) when \(\ta \leq {1 \over 2}\).
More specifically, when \(\ta > {1 \over 2}\), 
the curvature of the reference metric \eqref{cone-reference} might become unbounded from below. 
This possible lack of lower bound in the curvature has been one main obstacle in deriving laplacian estimates and higher order regularity results since the approach of Aubin and Yau 
for deriving estimates on the complex hessian depends on the existence of a lower bound on the  bisectional curvature of the referrence metric. 
Further, the lack of of a lower bound on the Ricci curvature of the reference metric means the lack of a lower bound on the laplacian of the Ricci potential, a yet another quantity the finiteness of whose lower bound is needed in deriving the laplacian estimate for the potential in the approach of Aubin-Yau.

Another work, which covers the case of larger angles as well is that of Jeffres, Mazzeo and Rubinstein \cite{je-ma-ru}.
There, in order to derive the laplacian estimate in the absence of a lower bound on curvature of \(\om_\ta\) a corollary of the Chern-Lu inequality has been used which, rather than the lower bound of the curvature of the background metric requires an upper bound on it.
Along with the observation that the curvature of \(\om_\ta\) is always bounded from above, the existence of K\"ahler-Einstein metrics are proved in \cite{je-ma-ru}.
Using a different approach to the Chern-Lu inequality, X.-X. Chen \emph{et al} have also derived the laplacian estimate in the Fano case in \cite{ch-do-su-2}.
This has been further refined by Yao in \cite{yao}.

In a different direction, in \cite{gu-pa} one finds  a clever adaptation of the classical calculations to the edge-cone setting by using an auxiliary function to control the behaviour of the possibly unbounded curvature terms in the study of the complex Monge-Amp\`ere equation.
The interested reader is referred to Chapter 7 of the survey article by Rubinstein \cite{rub}, wherein 
an extensve treatment of the laplacian estimates in the works mentioned above can be found.

However, one core idea in the present work is the observation that indeed the Pogorelov-Aubin-Yau approach can be used with very little modification if one observes that within the same edge-cone cohomology class there always exist  metrics with  lower bound on their bisectional curvature, and that with respect to this reference metric, the Ricci potential has the correct behaviour.
This allows us to show the following.

\begin{theorem}
  \label{thm-1-1}
	(a) [Calabi's first problem] Let \(\tilde{\ro} \in c_1(M) - (1- \ta) c_1(L) \in H^{1,1}(M,\bbR) \) be a closed 
	real (1,1)-form which is of class \(C^\al\) on \(M \sim D\),
	such that \((\tr_{\om_\ta} \tilde{\ro})^+ = o(|s|^{-2\ta})\) for some \(\eps>0\), and in the vicinity that \(\tilde{\ro}\) is generated by a local
	potential of the H\"older class \(C^\al_\tta\) for some exponent \(\al\).
Then, there is a potential \(\phi\), which belongs to the class \(C^{2,\tta}_\ta\) for some exponent \(\tta\), determined uniquely up to a constant, such that 
\begin{equation}
\label{prescribe-ricci}
	\ro(\om_\ta + dd^c \phi) = \tilde{\ro} +2 \pi (1 - \ta) [D] 
\end{equation}

	(b) [Calabi's second problem]
Assume that \(c(M)- (1 - \ta)c_1(L) = \m [\om_0]\), where, either i) \(\m \leq 0\), or ii) we have an \(L^\infty\) bound on the potential \(\phi\).
Then, there is a potential, belonging to the class \(C^{2,\tta}_\ta\) for some exponent \(\tta\), such that 
\begin{equation}
\label{twisted-ke}
	\ro( \om_{\ta,\phi}) = \m \om_{\ta,\phi} + 2 \pi(1 - \ta) [D]
\end{equation}
for \(\om_{\ta,\phi}=\om_\ta + dd^c \phi\).
Further, the potential \(\phi\) is unique up to addition of a constant  when \(\m = 0\),
and is unique otherwise.

\end{theorem}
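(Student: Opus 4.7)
The plan is to reduce both parts of Theorem \ref{thm-1-1} to the solvability of a complex Monge-Amp\`ere equation with reference \(\om_\ta\), and then to push through the a priori estimates by transferring them, via a potential change of variable, to a \emph{good reference metric} \(\hat\om\) in the same edge-cone cohomology class whose bisectional curvature on \(M\setminus D\) is bounded from below. For (a), the edge \(dd^c\)-lemma produces a Ricci potential \(f\) of the appropriate edge H\"older class with \(\tilde{\ro}-\ro(\om_\ta)+2\pi(1-\ta)[D] = -dd^c f\), turning \eqref{prescribe-ricci} into \((\om_\ta+dd^c\phi)^n = e^{f+c}\om_\ta^n\) for a normalising constant \(c\) fixed by \(\int e^{f+c}\om_\ta^n = [\om_\ta]^n\). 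For (b) the analogous reduction gives \((\om_\ta+dd^c\phi)^n = e^{f-\m\phi}\om_\ta^n\). I would then run the standard continuity method in each case, with openness at each \(t\) coming from the Fredholm theory of the linearised operator on \(C^{2,\tta}_\ta\).

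The \(C^0\) bound is obtained by Moser iteration in (a), directly from the maximum principle at the extrema of \(\phi\) when \(\m\le 0\) in (b)(i), and is hypothesised outright in (b)(ii). The heart of the matter --- and the main obstacle --- is the Laplacian estimate, since the bisectional curvature of \(\om_\ta\) need not be bounded below once \(\ta>1/2\). I would fix once and for all an edge potential \(\psi\) so that \(\hat\om:=\om_\ta+dd^c\psi\) has bisectional curvature bounded below by some \(-K\) on \(M\setminus D\); the existence of such a \(\psi\) within \([\om_\ta]\) is the key geometric input supplied by the present approach. Substituting \(\tilde\phi=\phi-\psi\) converts each Monge-Amp\`ere equation into one of the same form with reference \(\hat\om\) and a right-hand side in the same H\"older class. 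Against \(\hat\om\) the classical Aubin-Yau-Pogorelov pointwise inequality for \(\log\tr_{\hat\om}(\hat\om+dd^c\tilde\phi)\) applies verbatim, as it uses only the lower bound on the bisectional curvature of the reference and a lower bound on the Laplacian of its Ricci potential. A barrier argument exploiting \((\tr_{\om_\ta}\tilde{\ro})^+=o(|s|^{-2\ta})\) confines the maximum of \(\log\tr_{\hat\om}\om_{\ta,\phi}-A\tilde\phi\) to \(M\setminus D\), so that the interior pointwise inequality yields a genuine estimate up to the singular locus.

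Once the Laplacian is uniformly bounded on \(M\setminus D\), the equation is uniformly elliptic there and the Evans-Krylov-Calabi bootstrap in the edge spaces \(C^{2,\tta}_\ta\) of \cite{je-ma-ru} gives the required \(C^{2,\tta}_\ta\) regularity and closedness of the continuity path. Uniqueness is then standard: for (a) and for (b) with \(\m=0\) it follows from subtracting two Monge-Amp\`ere equations and integrating by parts against the natural positive \((n-1,n-1)\)-form built from the two solutions, while for (b) with \(\m\ne 0\) the maximum principle applied to the difference of two solutions at its extrema suffices. The delicate issues I anticipate are the construction of \(\hat\om\) together with a Ricci potential of \(\hat\om\) in the correct edge H\"older class, and the barrier argument up to \(D\); once these are in place, the remainder is essentially a translation of the classical Aubin-Yau calculation from \(\om_\ta\) to \(\hat\om\).
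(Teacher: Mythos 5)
Your central idea --- replacing \(\om_\ta\) by a cohomologous edge metric whose bisectional curvature is bounded below and running the Aubin--Yau computation against that reference --- is exactly the strategy of the paper, where the good metric is produced explicitly as \(\tilde{\om}=\om_\ta-\fc\,dd^c|s|_h^{2\ta'}\) with \(\ta'\in(\ta,1)\) (Proposition \ref{good-metric-pro}), and the hypothesis \((\tr_{\om_\ta}\tilde{\ro})^+=o(|s|^{-2\ta})\) is used, as you anticipate, to make the Laplacian of the transformed Ricci potential bounded from below. The implementation differs in three places. First, the paper does not run a continuity method with a barrier near \(D\): it approximates \(\tilde{\om}\) by the smooth family \(\tilde{\om}_\eps\) of \eqref{good-metric-eq}, proves a uniform Laplacian estimate for the smooth problems (where the maximum principle needs no localisation away from \(D\)), and passes to the limit; your barrier argument would have to be supplied as an additional, genuinely new step. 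Second, the \(C^0\) bound is obtained from Ko\l odziej's \(L^p\)-stability theorem rather than from Moser iteration. Third --- and this is the one point where your sketch is too quick --- the good metric has curvature unbounded \emph{above} (it blows up like \(|s|^{2\ta'-4}\) near \(D\)), so the classical pointwise inequality does not apply ``verbatim'': in its usual form it also requires an upper bound on the scalar curvature of the reference, which fails here. One must first use P\v{a}un's rearrangement of the curvature terms into \(2\sum_{\al<\be}\bigl(\tfrac{1+\phi_{\al\bal}}{1+\phi_{\be\bbe}}+\tfrac{1+\phi_{\be\bbe}}{1+\phi_{\al\bal}}-2\bigr)R_{\al\bal\be\bbe}\), whose prefactors are nonnegative, so that only the lower curvature bound enters. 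With that observation inserted, and the barrier either justified or replaced by the smooth-approximation scheme, your argument matches the paper's.
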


As in the smooth case, the solution of the Calabi problem relies on solving a complex Monge-Amp\`ere equation, but this time with an edge K\"ahler reference metric.
This connection, which calls for a bit more careful handling than in the smooth case, will be clarified in \textsection \ref{mise-en-equation}.

\begin{theorem}
\label{thm-1-2}
	Assume that the metric \(\om_\ta\) is as defined in \eqref{cone-reference}, and that for some exponent \(\al \in (0,1)\),  \(f \in C^\al_\ta\)
	is a function such that \( (\De f)^- = o( |s|_h^{-2 \ta})\).
	Then, there exists a solution \(\phi\) to the following equation:
	\begin{equation}\label{eq-1-4}
		(\om_{\ta} + dd^c \phi)^n = e^f \om_{\ta}^n; \text{  } \int e^f 		\om_{\ta}^n = \int \om_{\ta}^n
	\end{equation}
	which is unique up to a constant and belongs to the edge-cone H\"older space \(C^{2,\tta}_\ta\) for some exponent \(\tta\).
\end{theorem}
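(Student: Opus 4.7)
The plan is to solve \eqref{eq-1-4} by the continuity method applied to the family
\[
(\om_\ta + dd^c \phi_t)^n = e^{t f + c_t}\, \om_\ta^n, \qquad t \in [0,1],
\]
where the normalising constants $c_t$ are chosen so that the right-hand side integrates to $\int \om_\ta^n$. Openness at each $t$ follows from a standard implicit function theorem between the edge H\"older spaces $C^{2,\tta}_\ta$ and $C^\al_\ta$, once one verifies that the linearised operator, which is the Laplacian of an edge-cone K\"ahler metric acting on zero-mean functions, is an isomorphism with respect to these spaces. Closedness reduces, as always, to a uniform a priori bound $\|\phi_t\|_{C^{2,\tta}_\ta} \leq C$ independent of $t$.

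For the $C^0$ estimate I would run a Moser iteration against $\om_\ta$, exploiting the Sobolev and Poincar\'e inequalities that hold for edge-cone metrics in the angle range under consideration; equivalently, a Ko{\l}odziej-type pluripotential argument can be used since $e^{tf+c_t}\om_\ta^n$ remains an $L^p$ density with respect to $\om_\ta^n$ for some $p>1$ under the hypothesis on $f$.

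The key step, and the main obstacle, is the Laplacian estimate $n+\De_{\om_\ta}\phi \leq C$. The classical Aubin--Yau derivation of this estimate requires a lower bound on the bisectional curvature of the reference metric, and as recalled in the introduction this bound fails for $\om_\ta$ once $\ta > {1\over 2}$. To circumvent this, following the central idea highlighted in the introduction, I would pick a \emph{good reference metric} $\om_g$ in the same edge-cone cohomology class as $\om_\ta$ whose bisectional curvature is bounded below and whose Ricci potential has controlled edge behaviour near $D$. Writing $\om_g = \om_\ta + dd^c\eta$, equation \eqref{eq-1-4} is recast as
\[
(\om_g + dd^c\psi)^n = e^{\tilde f}\, \om_g^n, \qquad \psi := \phi-\eta,
\]
with $\tilde f$ determined by $e^{\tilde f}\om_g^n = e^f\om_\ta^n$. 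Applying the standard Aubin--Yau computation to the barrier $\log\tr_{\om_g}(\om_g + dd^c\psi) - A\psi$, with $A$ chosen larger than the negative part of the bisectional curvature of $\om_g$, the maximum principle yields $\tr_{\om_g}\om_{\ta,\phi} \leq C$. Since $\om_g$ and $\om_\ta$ are uniformly quasi-isometric on $M\sim D$, this bound transfers to the desired estimate on $\tr_{\om_\ta}\om_{\ta,\phi}$. The hypothesis $(\De f)^- = o(|s|_h^{-2\ta})$ is precisely what is needed to keep the term involving $\De\tilde f$ in this computation under control near the divisor.

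Once the Laplacian bound is secured, higher regularity follows along classical lines: an Evans--Krylov argument adapted to edge H\"older spaces (or, alternatively, a Calabi third-derivative identity combined with edge Schauder theory) promotes the estimate to $\|\phi\|_{C^{2,\tta}_\ta}\leq C$ for some $\tta\in(0,\al)$, closing the continuity method. Uniqueness up to an additive constant is the standard maximum principle argument applied to the difference of two solutions, using the concavity of $\log\det$.
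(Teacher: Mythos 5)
Your overall architecture differs from the paper's in one essential respect: you run the continuity method directly on the singular manifold in the edge H\"older scale, whereas the paper never sets up a continuity path in edge spaces at all. Instead it regularises the \emph{metric}: it replaces the good reference metric $\tilde{\om}$ by the smooth family $\tilde{\om}_\eps = \om_0 + dd^c\big((|s|_h^2+\eps)^{\ta} - (|s|_h^2+\eps)^{\ta'}\big)$, solves the smooth equations $(\tilde{\om}_\eps + dd^c\phi_\eps)^n = e^{f_\eps}\tilde{\om}_\eps^n$ by the classical theory, obtains the $L^\infty$ estimate and convergence of the potentials from Ko{\l}odziej's $L^p$-stability theorem (the right-hand sides converge in $L^p(\om_0)$), proves a Laplacian bound uniform in $\eps$ using the uniform lower curvature bound of the family $\tilde{\om}_\eps$ (with P\u{a}un's observation to avoid needing an upper scalar-curvature bound), and then passes to the limit, finishing with a localised Evans--Krylov argument. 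The central idea you both share --- trading $\om_\ta$ for a reference metric in the same edge class with bisectional curvature bounded below, so that the Aubin--Yau computation goes through and $(\De f)^-$ stays controlled --- is exactly the paper's Proposition on the good metric, and your reduction of the right-hand side to $\tilde f$ with $e^{\tilde f}\om_g^n = e^f\om_\ta^n$ is the same bookkeeping the paper does in \textsection 3. What the paper's regularisation buys, and what your route must supply separately, is twofold.

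First, your openness step requires the linearised operator (the Laplacian of $\om_\ta + dd^c\phi_t$, a metric with merely $C^{\tta}_\ta$ coefficients) to be an isomorphism between edge H\"older spaces; this is not a ``standard implicit function theorem'' verification but the hard linear input of the Jeffres--Mazzeo--Rubinstein approach, and you would need to invoke that theory explicitly. Second, and more substantively, you apply the maximum principle to $\log\tr_{\om_g}(\om_g + dd^c\psi) - A\psi$ \emph{on the singular space}: the maximum of this quantity may a priori be attained on $D$, where the solution is not yet known to be $C^3$ and the differential inequality is not available, so the conclusion $\tr_{\om_g}\om_{\ta,\phi}\leq C$ does not follow without an additional barrier or perturbation argument pushing the maximum off the divisor. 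The paper's whole point in introducing the smooth family $\tilde{\om}_\eps$ is to make the maximum principle unproblematic at each $\eps$ while keeping every constant in the estimate ($\inf R^\eps$, $\inf(\De_{\om_\eps}f_\eps)^-$, $\|\phi_\eps\|_{L^\infty}$) uniform in $\eps$. Your proof becomes complete if you either adopt that regularisation for the a priori estimates or supply the barrier argument near $D$; as written, the Laplacian estimate has a genuine gap at exactly the point where the singularity of the reference metric matters.
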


In order to solve the equation, we also use idea of approximating the edge-cone metric by a family of smooth metrics similar to what is done in \cite{ca-gu-pa, gu-pa}. 
An important observation that allows deriving estimates independent of the upper bound on the scalar curvature of the referrence metric is also borrowed from an earlier work of P\v{a}un \cite{pa}.

The proof of the theorems above relies on the following proposition which allows us to explicitly construct a good reference metric.

\begin{proposition}
\label{good-metric-pro}
		Let \(\om_0\) be a smooth K\"ahler metric and let \(\ta \in (0,1)\) and \(\ta' \in (\ta,1)\) be two real numbers.
	Then, for sufficiently small positive constant \(\fc > 0\), the following (1,1)-form 
	\begin{equation}	\label{good-metric-eq-0}
		\tilde{\om} := \om_\ta - \fc dd^c |s|_h^{2 \ta'}=
		 \om_0 + \fa dd^c |s|_h^{2 \ta} - \fc dd^c |s|_h^{2 \ta'}
	\end{equation} 
	is an edge-cone K\"ahler metric of cone angle \(\ta\), equivalent to the following edge-come metric:
	\begin{equation}
		\om_{\ta}:= \om_0 + \fa dd^c |s|_h^{2  \ta}
	\end{equation} 
	in such a way that the curvature of \(\tilde{\om}\) is bounded from below.
	Further, 
	the parameter \(\fc\)  can be chosen to be sufficiently small so that the metrics \(\om_\ta\) and \(\tilde{\om}_\ta\) are arbitrarily close 
	with respect to the H\"older norm \(C^\tta_\ta\) .
\end{proposition}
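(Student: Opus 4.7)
The proof splits into three stages: (i) K\"ahlerness of $\tilde{\om}$ and its equivalence to $\om_\ta$, (ii) the lower bound on bisectional curvature, and (iii) the smallness in the H\"older norm $C^\tta_\ta$.

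Stage (i) is a local computation near $D$. I will pick a point $p\in D$ and local coordinates $(z_1,\dots,z_n)$ in which $D=\{z_1=0\}$, together with a local trivialization of $L$ in which $|s|_h^{2} = e^{-\phi_h}|z_1|^{2}$ for a smooth real $\phi_h$. Direct differentiation shows that the leading normal-direction component of $\fa\, dd^c |s|_h^{2\ta}$ is proportional to $|z_1|^{2\ta-2}$, whereas that of $\fc\, dd^c |s|_h^{2\ta'}$ is proportional to $|z_1|^{2\ta'-2}$, and the exponent inequality $2\ta'-2>2\ta-2$ ensures the second is dominated by the first near $D$. For $\fc$ sufficiently small (depending on $\fa,\ta,\ta'$ and the geometry of $\om_0$ and $h$), the correction is dominated by half of the $\fa$-term in a tubular neighborhood of $D$, and is a uniformly small smooth $(1,1)$-form away from $D$. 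Hence $\tilde{\om}$ is positive definite and satisfies $\tfrac12\om_\ta\le\tilde{\om}\le 2\om_\ta$ globally, so it belongs to the same edge-cone class with cone angle $\ta$ and is mutually comparable with $\om_\ta$.

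Stage (ii) is the heart of the matter. The reason the bisectional curvature of $\om_\ta$ fails to be bounded below when $\ta>1/2$ is that the model cone $dd^c|s|^{2\ta}$ is flat outside $D$, but its combination with the smooth background $\om_0$ produces curvature contributions coming from the product structure: the inverse metric scales like $|s|^{2-2\ta}$ in the normal direction while the Hessian entries $\del\bdel g_{i\bar j}$ can scale like $|s|^{2\ta-4}$, and their coupling through the quadratic term of the Riemann tensor
\begin{equation*}
\tilde R_{i\bar j k\bar l} = -\del_k\del_{\bar l}\tilde g_{i\bar j} + \tilde g^{p\bar q}\,\del_k\tilde g_{i\bar q}\,\del_{\bar l}\tilde g_{p\bar j}
\end{equation*}
yields $|s|^{-2}$-terms of unfavorable sign. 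The plan is to expand both the quadratic and Hessian pieces of $\tilde R$ in the local coordinates and to isolate the leading $|s|^{-2}$ contributions. The correction $-\fc\, dd^c|s|_h^{2\ta'}$ injects new cross terms of order $\fc|s|^{2(\ta'-\ta)-2}$ and $\fc|s|^{2\ta'-4}$, which, after contraction with the inverse metric of order $|s|^{2-2\ta}$, land at precisely the $|s|^{-2}$ order. The key algebraic fact to verify is that the sign of these new contributions is positive, and that for $\fc$ sufficiently small (but fixed) they absorb the unfavorable terms coming from the interaction of $\om_0$ with the $\fa$-piece, yielding a uniform lower bound on the bisectional curvature. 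Once this is done locally, a finite cover of a tubular neighborhood of $D$, combined with the standard smooth bound on curvature of $\om_\ta$ away from $D$, gives a global lower bound.

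Stage (iii) is a direct estimate. By construction $|s|_h^{2\ta'}$ belongs to $C^{2,\al}_\ta$ for every $\al\in(0,1)$ with norm depending only on $(\om_0,h,\ta,\ta')$; since the $\ta'$-H\"older regularity is strictly better than $\ta$, we have $\|\fc\, dd^c|s|_h^{2\ta'}\|_{C^\tta_\ta}\le C\fc$, and choosing $\fc$ small enough makes $\tilde{\om}$ arbitrarily close to $\om_\ta$ in $C^\tta_\ta$.

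The principal obstacle is Stage (ii): tracking the sign of the leading $|s|^{-2}$ terms in the bisectional curvature requires careful bookkeeping of the local expansion, and the positivity of the correction relies crucially on the strict inequality $\ta'>\ta$ (a smooth strictly plurisubharmonic perturbation would fail to match the singular scaling of the Hessian terms, while $\ta'=\ta$ would only renormalize the coefficient $\fa$). The balance $\ta<\ta'<1$ is thus essential: large enough to keep the correction less singular than the $\fa$-piece, small enough to reach the $|s|^{-2}$ scale of the problematic curvature terms.
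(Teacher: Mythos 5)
Your outline follows the same route as the paper: adapted coordinates along the divisor (the Tian--Yau normalization of Lemma \ref{lemma-ti-ya}), explicit expansion of $\tilde g_{\al\bbe}$, its first and second derivatives and the inverse metric, and a term-by-term analysis of the curvature to determine the sign of the leading singular contribution. Stages (i) and (iii) agree with the paper and are fine. The difficulty is that Stage (ii), which you yourself call the heart of the matter, is left as ``the key algebraic fact to verify'' --- but that fact \emph{is} the proposition, and nothing in your sketch indicates why the sign should come out positive rather than negative. The paper's proof consists precisely of carrying out this verification: in the adapted coordinates the only potentially unbounded component is $R_{1\bar 1 1\bar 1}$; the Hessian term contributes $-\ta^2(1-\ta)^2|\fz^1|^{2\ta-4}+\ta'^2(1-\ta')^2|\fz^1|^{2\ta'-4}$; the quadratic term $g^{\m\bn}g_{1\bn,1}g_{\m\bar 1,\bar 1}$, after expanding $g^{1\bar 1}=\ta^{-2}|\fz^1|^{2(1-\ta)}\bigl(1+\tfrac{\ta'^2}{\ta^2}|\fz^1|^{2(\ta'-\ta)}+\cdots\bigr)$, contributes $+\ta^2(1-\ta)^2|\fz^1|^{2\ta-4}-2\ta'^2(1-\ta)(1-\ta')|\fz^1|^{2\ta'-4}+\ta'^2(1-\ta)^2|\fz^1|^{2\ta'-4}+\cdots$; the $|\fz^1|^{2\ta-4}$ terms cancel (flatness of the model cone), and the surviving coefficient of $|\fz^1|^{2\ta'-4}$ is the perfect square $\ta'^2\bigl[(1-\ta')^2-2(1-\ta)(1-\ta')+(1-\ta)^2\bigr]=\ta'^2(\ta'-\ta)^2>0$, so $R_{1\bar 1 1\bar 1}$ in fact blows up to $+\infty$ near $D$. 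Without exhibiting this cancellation and this square, the asserted lower bound is unsupported.

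Two further inaccuracies in your Stage (ii). First, the new contributions do not ``land at precisely the $|s|^{-2}$ order'': the leading new term in $R_{1\bar 1 1\bar 1}$ is of order $|s|^{2\ta'-4}$, hence of order $|s|^{2\ta'-4\ta}$ after contracting with $\om_\ta$-unit vectors (for which $|v^1|\leq C|\fz^1|^{1-\ta}$); this exponent has nothing to do with $-2$ in general. Second, the smallness of $\fc$ plays no role in the curvature bound --- shrinking $\fc$ only shrinks the favorable positive term, so it cannot be what ``absorbs'' the unfavorable ones. What makes the positive $|\fz^1|^{2\ta'-4}$ term dominate the problematic contribution of the smooth background (which enters the expansion of $g^{1\bar 1}$ only at relative order $|\fz^1|^{2(1-\ta)}$) is the exponent inequality $\ta'<1$, valid for every fixed $\fc>0$; smallness of $\fc$ is needed only for the positivity of $\tilde\om$ and for the $C^\tta_\ta$-closeness in Stage (iii).
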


Having proved the proposition above, we have to  study how the equation and its components, in particular, the Ricci potential, transform under this change of the reference metric and then prove that we can indeed derive the estimates.
The proof of this proposition and the study of how the equation transform under the change of the reference metric is the subject of \textsection \ref{good-metric-sec}.

Besides opening way for estimates previously known in the smooth case, which we shall hopefully explore elsewhere, one advantage of this approach is that this method is based on the geometry of the edge-cone metrics
and we, therefore, hope that such observations about conical metrics will be of interest in their own right. 
Further, this approach  allows a wide class of preassigned Ricci forms to be realised by edge-cone K\"ahler metrics.

\section{Edge-cone functional spaces, Linear elliptic theory }

In this section, we shall introduce the notation and basic concepts we shall frequently make use of in the rest of this note.
For the definition of spaces we follow
\cite{do},
where the linear elliptic theory for edge-cone metrics is developed.

Let us for the sake of clarity work on \(\bbC^n\) and assume that the edge-cone singularity occurs along the divisor \(\sum_{j=1}^k (1-\ta_j) [\fz_j = 0]\).
The edge-cone K\"ahler form we consider as our model on \(\bbC^n\) is the following:
\begin{equation}
\label{cone-model}
	\om_{\fK} :=
	dd^c \big(
	\sum_{j=1}^k |\fz_j|^{2\ta_j} + \sum_{j=k+1}^n |\fz_j|^2
	\big)
	=
	  \sum_{j=1}^k  \fraki \ta_j^2 \vert \fz_j \vert^{2\ta_j - 2}  d \fz_j \wge d \bfz_j
	+ 
	\sum_{j=k+1}^n \fraki d \fz_j \wge d \bfz_j
\end{equation}

To keep our notation simpler, in the following definitions we shall only state the case of a single divisor along \([\fz_1 = 0]\). 
The definitions can be extended to the case of multiple divisors with possible normal crossing in the obvious way.

\begin{definition}
Consider the K\"ahler space \(\bbC_\ta \times \bbC^{n-1}\).
For a given function 
\(f(\fz^1,..., \fz^n)\) 
define the associated function 
\(\tilde{f}(\xi, ... , \fz^n)\)
where \(\xi = |\fz_1|^{\ta - 1} \fz_1\).
Then, \(f\) is said to be of class \(C^\al_\ta\) provided that \(\tilde{f} \in C^\al\).
\end{definition}

There is another change of variable which we shall use which
is compatible with the picture one has of in the case of conical angle 
\(\ta={1 \over p}\), for \(p \in \bbN\), 
on \(\bbC_{1 \over p}\),
which can be \emph{uniformised} via the map 
\begin{eqnarray}
	\bbC_{1 \over p} &\to& \bbC \nonumber \\
	\fz &\mapsto& \fz^{1 \over p} =: \fw
\end{eqnarray}
The advantage of this transformation, unlike transformation \(\fz \mapsto \xi\), is that  change of variable to \(\fw\) is a -local- bi-holomorphism, and hence we can calculate  geometric quantities such as curvature in \(\zeta\).
It is important to observe that \(\fw\) and and \(\xi\), defined above, only disagree in the angle variable. 
This, in particular, means that the pull-backs of the euclidean metric by the two transformations define equivalent distances.  This allows us to define the H\"older spaces using either transformation.

Further, in either case, the function \(\vert \fz \vert^2\),
 which is the  K\"ahler potential of the flat metric on \(\bbC\),
is sent to the function \(\vert \fz \vert^{2 \over p}\). 
We have defined the spaces \(C^{k,\al}_\ta\) 
in such a way that functions such as \(\vert \fz \vert^{2 \over p}\) 
will belong to them.

From this picture it should be clear that there is a more intrinsic way of defining the H\"older spaces \(C^{\al, \be}\) which is the content of the following definition. 
We shall indeed make use of this equivalence later.

\begin{definition}
	For a given function \(f\), define the semi-norm \([f]^\ta_{\al}\) as follows:
	\[
		[f]^\ta_{\al} := \sup_{x, y} {|f(x) - f(y)| \over d_\ta(x,y)^\al}
	\]
	Also, define the \(C^{\al}_\ta\) norm of the function \(f\) to be 
	\(\Vert f \Vert_{\al,\ta} = \Vert f \Vert_0 + [f]^\ta_{\al}\),
	 and let \(C^{\al}_\ta\) designate the space of all functions having finite \(\Vert . \Vert_{\al,\ta}\) bound.
\end{definition}

Using the \(C^{\al}_\ta\) spaces for functions, we can now define \(C^{\al}_\ta\) forms and thereby define the space \(C^{2,\al}_\ta\).

\begin{definition}
	Let \(\si\) be a \((1,0)\)-form. We say define:
	\[
		\Vert \si \Vert_{\al,\be} := \sup 
	\]
\end{definition}

Following Donaldson's definition of edge-cone H\"older spaces we recall the following definitions.

\begin{definition}
We say that a function \(f\) is \(C^\al(\calS_\ta)\) provided that \(f \) is \(\al\)-H\"older continuous on the the sector \(\arg \zeta \in [0,2 \pi \ta)\) with the two rays \(\ta =0\) and \(\ta = 2 \pi \ta\) identified outside of the origin.
The spaces \(C^{k,\al}\) are defined similarly.
\end{definition}

\section{Setting up the equation}

\label{mise-en-equation}

In this section, we shall see how to reduce the proof of Theorem \ref{thm-1-1} can be reduced to Theorem \ref{thm-1-2}.
To this end, we study the behaviour of various components of the complex Monge-Amp\`ere equation written with respect to 
the reference metrics \eqref{cone-reference} and \eqref{good-metric-eq-0}.
Here we will assume some of the results in \textsection \ref{good-metric-sec}.

In the case of  smooth background K\"ahler metric \(\om_0\) on a manifold which satisfies \(\m \om_0 \in  c_1(M) \), when one tries to find a K\"ahler-Einstein metric with constant \(\m \in \{-1, 0, 1\}\), the function \(f\)  on the right hand side of the complex Monge-Amp\`ere equation  \((\om_0 + dd^c \phi)^n = e^{f - \m\phi} \om_0^n \) can be obtained by solving the following equation for \(f\): 
\[
	\ro(\om_0) - \m \om_0 = dd^c f
\]

However, as we shall see in \textsection \ref{good-metric-sec} the Ricci-form of the family of metrics \(\om_\eps\) and \(\tilde{\om}_\eps\) may not be bounded in general, the former from below and latter from above. 
This lack of lower bound on the curvature of  \(\om_0 + dd^c |s|_h^{2 \ta}\) has  already been well-known as a difficulty in the theory of edge-cone K\"ahler spaces as certain estimates depend on the lower bound of the curvature of the reference metric.
But This, in particular, implies that we cannot assume a laplacian bound on \(f\).

To make this idea more quantitative, we recall that by a \emph{edge-cone K\"ahler-Einstein} metric, \(\om\), of cone angle \(\ta\) we mean one that satisfies \(\ro(\om) = \m \om +2\pi (1 - \ta) [D]\).
Similar to the smooth case, we are lead to the following equation for \(f\)
\begin{equation}
\label{ricci-potential}
	dd^c f = \ro(\om_{\fK}) - \m \om_{\fK} - 2 \pi(1 - \ta) [D]
\end{equation}
wherein \(\om_{\fK}\) is an arbitrary edge-cone background metric.

First, as we shall see in \textsection 4,  the curvature of the reference metric 
\(\tilde{\omega}_\ta \)
 becomes unbounded from above at the rate of \(|s|^{2\ta - 4}\), more precisely, we have:
 \[
	 \tilde{R}(.,.,.,.) \geq C g_\ta \boxtimes g_\ta |s|^{2 \ta' -4\ta}
 \]
wherein \(\boxtimes\) denotes the Kalkurni-Nomizu product of tensors. 
As a result, the Ricci form of the metric also behaves asymptotically as:

\[
	\varrho(\tilde{\om}_\ta) \geq C g_{\ta} |s|^{2\ta' -4\ta}
\]

Therefore, in terms of the asymptotic behaviour close to the divisor in 
\ref{ricci-potential} the term \(\varrho(\tilde{\om}_\ta)  \) dominates the metric. 
As a result, the form
\(dd^c f \) is  bounded from below by a multiple of the K\"ahler form \(\tilde{\om}_\ta \), and the laplacian 
\(\De_\ta f\) is bounded from below.

Similarly, assume that instead of a K\"ahler-Einstein metric we seek to realise a prescribed Ricci form \(\tilde{\varrho} \).
The potential \(f\) will then haveto satisfy the following:
\[
	dd^c f = \varrho(\tilde{\om}_\ta) - \tilde{\varrho}
\]
Again, we see that so long as 
\((\tr_{\om_\ta} \tilde{\ro})^+ = \calO(|s|^{-2\ta })\) we can find a \(\ta' \in (\ta,1) \) so that 
the Ricci form \(\tilde{\varrho} (\tilde{\om}_\ta) \) 
will have blow up at a higher rate and will, thereby, 
dominate the behaviour and will make 
\(dd^c f \) a current bounded from below.
In particular, this means that \(\De_\ta f \) will be bounded from below which is what we need for the laplacian estimate.

Having made these observations, one can follow the usual way of reducing the statement of 
Theorem 
\ref{thm-1-1}
to that of Theorem \ref{thm-1-2}.

\section{Choosing a good metric and approximation by  smooth family}
\label{good-metric-sec}
In deriving the laplacian estimates, it is evident in the calculations of the Pogorelov-Aubin-Yau approach that the estimate depends on the lower bound of the bisectional curvature. 
However, once we add a potential of the form \(\vert s \vert^{2 \ta}_h\) to a smooth background metric
in the vicinity of the zero locus of \(s\) 
the curvature might become unbounded from below , for \(\ta > {1 \over 2}\).
Outside of the divisor, the curvature of such metric is always bounded from above, however, as it has been shown in the appendix of 
\cite{je-ma-ru}.

Here, we shall show how one could perturb the metric in the same -edge-  cohomology class so that the curvature of the metric will become bounded from below.
Indeed, the perturbed  meric and the metric 
\(\om_\ta\) 
are close in a suitable H\"older norm.
Hence, bounding the laplacian with respect to one of them will suffice for deriving {\it a priori} bounds on the complex hessian.

The smallness of the parameter \(\fc\) guarantees that the (1,1)-form thus obtained is positive definite and therefore a K\"ahler metric.
Obviously,  we can always scale the metric \(h\) defined on the line bundle so that the expression \(\tilde{\om}=\om_0 + dd^c |s|_h^{2 \ta} - dd^c |s|_h^{2 \ta'}\) is positive definite. 
Indeed we shall assume this from now on and will drop the coefficients \(\fa\) and \(\fc\).
	It is easy to observe in the proof that the correction \(|s|_h^{2 \ta'}\) to the potential does not change the curvature properties of the metric so much at the points away from the divisor. 

Since we are going to construct the solution as the limit of a sequence of smooth solutions, each of which is obtained by solving with respect to a smooth reference metric, we need the following lemma which allows proving uniform estimate for a sequence of smooth approximations of \(\tilde{\om}_\eps\).

\begin{lemma} 
\label{good-metric-lemma}
The family \(\{ \om_\eps \}_\eps\) defined as:

\begin{equation}
\label{good-metric-eq}
	\tilde{\om}_\eps = \om_0 + dd^c \big( (|s|_h^2 + \eps )^ \ta - (|s|_h^2 + \eps)^ {\ta'} \big)
\end{equation}
is a smooth family approximating \(\tilde{\om}\). 
Further, elements of \(\tilde{\om}_\eps\) have a uniform (in the parameter \(\eps\)) lower bound on their curvature

\end{lemma}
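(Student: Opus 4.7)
The plan is to reduce the claim to an explicit curvature computation in local coordinates adapted to $D$, where the role of the subtracted $\ta'$-term becomes transparent. Since $\ta' > \ta$, the Hessian of $(|s|_h^2 + \eps)^{\ta'}$ is strictly more singular near $D$ than that of $(|s|_h^2 + \eps)^{\ta}$; the sign with which it is subtracted is chosen precisely so that this stronger singularity cancels, with a positive margin, the possibly negative contribution to the bisectional curvature coming from the potential $(|s|_h^2+\eps)^\ta$. The smoothness and approximation statements are essentially routine: each $\tilde\om_\eps$ is smooth because $|s|_h^2 + \eps > 0$, and on any fixed neighbourhood of $D$ one has $dd^c\big((|s|_h^2 + \eps)^\ta - |s|_h^{2\ta}\big) \to 0$ in the $C^\al_\ta$-norm as $\eps \to 0$, together with the corresponding statement for $\ta'$. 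Combined with Proposition \ref{good-metric-pro}, this yields $\tilde\om_\eps \to \tilde\om$ in $C^\al_\ta$ and a uniform positive lower bound on $\tilde\om_\eps$ for all sufficiently small $\eps$.

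For the curvature bound I would work in holomorphic coordinates $(z_1,z')$ adapted to $D = \{z_1 = 0\}$, writing $|s|_h^2 = e^{-\vfi}|z_1|^2$ for a smooth local potential $\vfi$ of $h$. Setting $\Psi_\eps := (|s|_h^2 + \eps)^\ta - (|s|_h^2 + \eps)^{\ta'}$, the components of $\tilde\om_\eps = \om_0 + dd^c \Psi_\eps$ behave like $(|s|_h^2 + \eps)^{\ta-1}$ in the transverse direction and are uniformly bounded along $D$. The bisectional curvature $R_{i\bj k \bl}$ can then be computed by differentiating $\log \det g_\eps$ and contracting third and fourth derivatives of $\Psi_\eps$ against the inverse metric. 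The leading singular contribution of $dd^c(|s|_h^2+\eps)^\ta$ to $R_{i\bj k \bl}$ is of order $(|s|_h^2+\eps)^{\ta-2}$ and can carry an unfavourable sign, while the contribution from $-dd^c(|s|_h^2+\eps)^{\ta'}$ is of the stronger order $(|s|_h^2+\eps)^{\ta'-2}$ and carries the correct sign, producing a pointwise bound of the form $\tilde R \geq C(|s|_h^2 + \eps)^{2\ta' - 4\ta}\, g_\eps \boxtimes g_\eps$, in line with the estimate anticipated in \textsection \ref{mise-en-equation}.

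The hard part is the bookkeeping of signs and orders of singularity together with verifying that the constants are independent of $\eps$. The plan is to expand the fourth derivatives of $(|s|_h^2 + \eps)^\ta$ and $(|s|_h^2 + \eps)^{\ta'}$ via the chain rule, group the resulting terms by their order in $|s|_h^2 + \eps$, and isolate the rank-one piece supported in the transverse direction which is responsible for the dominant singular contribution to $R_{i\bj k \bl}$. The decisive mechanism is that every negative power of $(|s|_h^2+\eps)$ produced by the $\ta$-term is controlled by a negative power of the same quantity produced by the $\ta'$-term multiplied by the harmless positive factor $(|s|_h^2+\eps)^{\ta'-\ta}$, which vanishes on $D$; since every quantity of the form $(|s|_h^2+\eps)^\al$ with $\al > 0$ is uniformly bounded on $M$ independently of $\eps$, the resulting lower bound on the bisectional curvature of $\tilde\om_\eps$ is $\eps$-independent. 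The smooth cross-terms coming from $\om_0$ give a bounded correction that is absorbed into the constant.
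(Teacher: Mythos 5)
There is a genuine gap, and it lies at the heart of your argument: the claimed dominance mechanism is backwards. You assert that because \(\ta' > \ta\) the Hessian of \((|s|_h^2+\eps)^{\ta'}\) is \emph{more} singular near \(D\) than that of \((|s|_h^2+\eps)^{\ta}\), and that its contribution to the curvature, of order \((|s|_h^2+\eps)^{\ta'-2}\), is therefore "stronger" than the \(\ta\)-term's contribution of order \((|s|_h^2+\eps)^{\ta-2}\). In fact the opposite holds: in adapted coordinates the transverse Hessian components behave like \(|\fz^1|^{2\ta-2}\) and \(|\fz^1|^{2\ta'-2}\) respectively, and since \(2\ta'-2 > 2\ta-2\) the \(\ta'\)-term is the \emph{less} singular of the two (this is exactly why subtracting it leaves the cone angle equal to \(\ta\) and why \(\Vert \tilde{\om}-\om_\ta\Vert_{\om_\ta}=\calO(|\fz^1|^{2(\ta'-\ta)})\to 0\)). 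A less singular positive term cannot absorb a more singular term of unfavourable sign, so as stated your argument cannot close.

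What actually saves the day in the paper's proof is an exact cancellation that your outline never identifies. In the formula \(R_{\al\bbe\ga\bde}=-g_{\al\bbe,\ga\bde}+g^{\m\bn}g_{\al\bn,\ga}g_{\m\bbe,\bde}\), the worst terms in \(R_{1\bar 1 1\bar 1}\), of order \(|\fz^1|^{2\ta-4}\) (coefficient \(\ta^2(1-\ta)^2\)), cancel identically between the two pieces --- the same cancellation that makes the model cone flat. The surviving leading term is of order \(|\fz^1|^{2\ta'-4}\), and the whole point is the sign of its coefficient: collecting the contribution of \(-g_{,\ga\bde}\), the cross terms in \(\partial g\,\bar\partial g\), and the first-order correction \(\tfrac{\ta'^2}{\ta^2}|\fz^1|^{2(\ta'-\ta)}\) in the expansion of \(g^{1\bar 1}\) yields \(\ta'^2(1-\ta')^2+\ta'^2(1-\ta)^2-2\ta'^2(1-\ta)(1-\ta')=\ta'^2(\ta'-\ta)^2>0\). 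This positive subleading term then dominates the remaining bounded-or-smaller contributions (including the \(\calO(|\fz^1|^{2(1-\ta)})\) cross term with the smooth metric, which is the term that can drive the curvature of \(\om_\ta\) alone to \(-\infty\), and which is dominated precisely because \(\ta'-\ta<1-\ta\)). So the required additions to your plan are: (i) the identification of the exact cancellation at order \(|\fz^1|^{2\ta-4}\), carried out in the normalised coordinates of Lemma \ref{lemma-ti-ya}; (ii) the expansion of the inverse metric to first order; and (iii) the explicit computation showing the coefficient of \(|\fz^1|^{2\ta'-4}\) is \(\ta'^2(\ta'-\ta)^2\). Your reduction of the \(\eps\)-uniformity to boundedness of positive powers of \(|s|_h^2+\eps\) is also too coarse, since the curvature involves negative powers of that quantity; the paper instead observes that all terms are monomials in \(\eps+|s|_h^2\) and \(|s|_h\), so it suffices to verify the bound at \(\eps=0\).
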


Indeed, what we shall prove is that in curvature of these metrics satisfy
\(R^\eps(v, \bar{v}, w, \bar{w}) \geq C \vert v \vert_\ta  \vert w \vert_\ta\) for some constant \(C\).
The following proof essentially shows both Proposition \ref{good-metric-pro} and Lemma \ref{good-metric-lemma}. 
Indeed the lemma is just a small but useful observation about the content of  Proposition \ref{good-metric-pro}. 
The rest of this section is dedicated to the proof of the proposition.

\subsection{Proof of Proposition \ref{good-metric-pro}}

Let us notice that a function of the form \(\eta = \vert s \vert_h^{2 \ta'}\), where \(\ta' \in ( \ta', 1)\), belongs to \(C^{2,\al}_\ta\) for some exponent \(\al\) in the sense defined before.
In particular, \(\eta\) is a valid K\"ahler potential to be added to the metric.
Let us first  prove that  
\(\tilde{\om}\) is indeed a metric equivalent to \(\om_\ta\).
It will suffice to show that the complex hessian of the correction potential, \(dd^c |s|^{2 \ta'}\), is bounded, as a (1,1)-form, when measured with respect to the metric \(\om_\ta\).
Just as it is done already in \eqref{tilde-omega-good-coord}, by calculating the complex hessian of the potential \(|s|_h^{2 \ta'}\) in the special coordinates we see that the expression 
\[ \Vert \tilde{\om} - \om_\ta \Vert_{\om_\ta}^2 = g^{\ka \bla}_\ta g^{\m \bn}_\ta \big( |s|_h^{2 \ta'} \big)_{,\m \bla} \big( |s|_h^{2 \ta'} \big)_{,\ka \bn} \]
consists of terms of the form \(g^{\ka \bla}_\ta g^{\m \bn}_\ta M_{,\ka \bn} M_{,\m \bla} |\fz^1|^{4 \ta'}   \), which are finite, and when all indices are equal to 1, other terms which are  dominated by \(g^{1 \bar{1}}_\ta g^{1 \bar{1}}_\ta M^2  |\fz^1|^{4 \ta' - 4} \).
Upon noticing that \(g^{1 \bar{1}} = \calO( |\fz_1|^{2 - 2\ta})\), and that \(\ta' > \ta\), one concludes that  \( \Vert \tilde{\om} \Vert_{\om_\ta} = \calO(|\fz^1|^{2( \ta' - \ta)}  ) \) which is finite and, indeed, tends to zero as the points approach the divisor.

In order to study the curvature tensor we shall use the following lemma which will simplify our calculations. 
This observation seems to have been first stated in 
\cite{ti-ya}, in the proof of Lemma 4.3 in \cite{ti-ya}, and also used in the proof of the existence of an upper bound on the curvature of edge-cone metrics in 
\cite{je-ma-ru}.
The version we state below as well as the proof may be found in \cite{ca-gu-pa}, where it appears as Lemma 4.1.

\begin{lemma} 

\label{good-coord}
\label{lemma-ti-ya} 

Assume that for \(j \in \{1, ..., n_0\}\) the \(D_j\)'s are irreducible divisors with at most normal crossing with associated  line bundles \(L_j\) with hermitian metrics \(h_j\) and defining sections of the corresponding line bundles \(s_j\).
Let \(p \in \cap D_j \). 
Then, there is a neighbourhood of \(p_0\) in which for any point \(p\) there is a choice of local coordinates for the manifold \(M\) and trivialisations \(\tta_j\) for the line bundles \(L_j\) on an open set \(U\) so that 

\begin{enumerate}
	\item the hypersurfaces locally correspond to flat hyperplanes: 
	\(U \cap D_j = [z_j =0]\), 
	\item if the herimitian metric \(h_j\) is represented by \(e^{-\phi}\) in the trivialisation \(\tta_j\), then at the point \(p\) we have 
	\[
		\phi_j(0) = 0,  \quad d \phi_j(p) = 0,  \quad 
		{\del \phi \over \del \fz^\al \del \fz^\be}(p)= 0.
	\]
	
\end{enumerate}

	Further, all higher derivatives of \(\phi\) are  bounded uniformly when the point varies on a compact subset of \(U\).

\end{lemma}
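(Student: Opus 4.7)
The plan is to first normalise the local coordinates on $M$ using the normal crossing assumption, and then to modify each trivialisation of $L_j$ by a nowhere-vanishing holomorphic factor whose Taylor polynomial at $p$ cancels the unwanted terms of the local K\"ahler potential.

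For the coordinates, the normal crossing assumption at $p_0$ provides, via the holomorphic inverse function theorem applied to the defining sections $s_j$, local coordinates $(\fz^1,\dots,\fz^n)$ on a neighbourhood $V$ of $p_0$ such that $D_j \cap V = \{\fz^j = 0\}$ for $j = 1,\dots,n_0$. For any point $p$ in a sufficiently small subneighbourhood, a translation centres these coordinates at $p$ while preserving the fact that each $D_j$ is a coordinate hyperplane.

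For the trivialisations, start with any holomorphic frame $\tta_j^{(0)}$ of $L_j$ over $V$ and write $|\tta_j^{(0)}|^2_{h_j} = e^{-\phi_j^{(0)}}$ with $\phi_j^{(0)}$ smooth and real. A change of frame $\tta_j = e^{g_j(\fz)}\tta_j^{(0)}$ by a holomorphic function $g_j$ transforms the potential as $\phi_j = \phi_j^{(0)} - 2\Rep(g_j)$. Because $\phi_j^{(0)}$ is real, the three prescribed cancellations at $p$ involve only the holomorphic data of $g_j$, namely $g_j(p)$, $\del g_j(p)$, and the pure holomorphic Hessian $\del^2 g_j(p)$. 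Setting
\begin{equation*}
g_j(\fz) = \tfrac{1}{2}\phi_j^{(0)}(p) + \sum_{\al}\frac{\del \phi_j^{(0)}}{\del \fz^\al}(p)\,(\fz^\al - p^\al) + \tfrac{1}{2}\sum_{\al,\be}\frac{\del^2 \phi_j^{(0)}}{\del \fz^\al \del \fz^\be}(p)\,(\fz^\al - p^\al)(\fz^\be - p^\be)
\end{equation*}
produces a holomorphic polynomial whose exponential is a nowhere-vanishing holomorphic factor, hence a legitimate change of trivialisation. By direct computation, the new potential $\phi_j$ satisfies $\phi_j(p) = 0$ and $\del^2 \phi_j /\del \fz^\al \del \fz^\be (p) = 0$; reality of $\phi_j$ then supplies the antiholomorphic counterparts and in particular forces $d\phi_j(p) = 0$.

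Finally, for the uniformity assertion, the Taylor coefficients used to define $g_j$ depend smoothly on the base point $p$ and $\phi_j^{(0)}$ is smooth on a fixed neighbourhood of $p_0$, so the modified potentials and all their higher derivatives vary smoothly with $p$. Consequently every $C^k$ norm of $\phi_j$ is bounded uniformly as $p$ ranges over a compact subneighbourhood of $p_0$. No substantive obstacle is anticipated; the only point to verify is that the three requested normalisations can simultaneously be achieved by a single holomorphic quadratic polynomial, which is possible precisely because $\phi_j^{(0)}$ is real and the three conditions each constrain only the holomorphic part of the second-order Taylor expansion of $g_j$ at $p$.
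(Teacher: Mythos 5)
Your argument is correct, and it is the standard one: the paper does not prove this lemma itself but refers for both statement and proof to Lemma 4.1 of \cite{ca-gu-pa}, whose proof is precisely your device of twisting the trivialisation by $e^{g_j}$ with $g_j$ the holomorphic part of the second-order Taylor polynomial of $\phi_j^{(0)}$ at $p$, so that $\phi_j=\phi_j^{(0)}-2\Rep(g_j)$ kills the value, the differential and the pure holomorphic Hessian while necessarily leaving the gauge-invariant mixed Hessian (the curvature of $h_j$) untouched. The one small slip is the claim that one should translate the coordinates to centre them at $p$: in the way the lemma is used later in the paper, $p$ is a point near but generally not on the divisor, so translating the first $n_0$ coordinates would destroy $U\cap D_j=[\fz^j=0]$; the correct reading is to keep the divisors as the coordinate hyperplanes and impose the normalisation of $\phi_j$ at the point $p$ itself, which your construction of $g_j$ already accomplishes since it is written in powers of $\fz^\al-p^\al$.
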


\begin{remark}
	It is probably worth mentioning that the above lemma will be used to estimate the rate of blow-up of quantities in terms of \(\fz_1\)
	However, in order for such an estimate to make sense one has to also notice that although the coordinates chosen do  depend on the point \(p\), all the coordinates chosen, in particular \(\fz_1\), are uniformly equivalent as the point \(p\) varies on a compact set.
	In particular, it is well-defined to speak of the rate of blow-up or the rate of vanishing in terms of powers of \(\fz^1\).
\end{remark}

By bounds on the curvature we mean \(R(v, \bar{v}, w, \bar{w}) = R_{\al \bbe \ga \bde} v^\al \bar{v}^{\be} w^\ga \bar{w}^\de\) when \(v= \del_\al v^\al\) and \(w = \del_\al w^\al\) are of unit norm with respect to the edge-cone metric \(\om_{\fK}\).
Since in our adopted coordinate system the metric satisfies \(g_{1 \bar{1}} \approx |\fz_1|^{2\ta - 2}\),
we have that \(\vert v_1 \vert , \vert w_1 \vert \leq C |\fz_1|^{1 - \ta \over 2}\).
Therefore, in studying the terms appearing in the curvature tensor, we shall consider only the once that persist as -potentially- infinite terms after multiplying   \( |\fz^1|^{2 - 2\ta}\).

We have to prove that the   family of metrics \(
\tilde{\om}_\eps = \om_0 + dd^c  \big ( (\eps + |s|_h^2)^ {\ta} + {1 \over N} (\eps + |s|_h^2)^{\ta'} \big )\) has a uniform lower bound on the curvature tensor.
 The coefficient \({1 \over N}\) is added to make sure that \(\tilde{\om}_\eps\) stays positive definite. It is easy to see that the metric \(h\) on the line bundle \(L\) can always be scaled so that the positivity condition holds for the following family of metrics defined in \eqref{good-metric-eq}.
Therefore, without loss of generality, we shall assume from now on that \(N=1\).
One can find the curvature tensor \(R^\eps_{\al \bbe \ga \bde}\) by differentiating the metric. 
However, one may note that since the components of the curvature tensor, written in coordinates, are combinations of various powers of \( \eps + |s|_h^2 \) and \(|s|_h\), it will suffice to show the claim when the parameter \(\eps \) is zero and that will prove the proposition for the entire range of the parameter \(\eps\).

In the coordinate system constructed in Lemma \ref{lemma-ti-ya} we can write \(|s|_h^{2 \ta} = a^\ta |\fz|^{2 \ta}\), and also \(|s|_h^{2 \ta'} = a^\ga |\fz|^{2 \ta'}\). Let us keep make the following substitution in order to keep the notation simpler: \(K:= a^\ta\), \(M := a^{\ta'}\).
Evidently, in the special coordinates of Lemms \ref{good-coord}
we have at the point \(p\) that \(M(p)=K(p)=1\), \(dK(p)=dM(p)=0, K,_{\al \be} = M,_{\al \be} = 0\), for \( \al, \be = 1 ... n\).
Since we have taken \(\eps = 0\), we have:
\[
	\tilde{g}_{\al \bbe} = 
	g^0_{\al \bbe} + \big( K |\fz|^{2 \ta} - M |\fz|^{2 \ta'} \big)_{,\al \bbe}
\]
By differentiating directly we obtain:
\begin{eqnarray}
\label{tilde-omega}
	\tilde{g}_{\al \bbe} &=& g^0_{\al \bbe} 
	+ K_{,\al \bbe} |\fz|^{2 \ta}
	+ \ta \de_{1\be} K_{,\al} |\fz|^{2 \ta -2} \fz 
	+ \ta \de_{1 \al} K_{,\bbe} |\fz|^{2 \ta - 2} \bfz 
	+ \ta^2 \de_{1 \al}  \de_{1 \be} K |\fz|^{2 \ta -2} \nonumber \\
	&-&
	 M_{,\al \bbe} |\fz|^{2 \ta'}
	- \ta' \de_{1\be} M_{,\al} |\fz|^{2 \ta' -2} \fz 
	- \ta' \de_{1 \al} M_{,\bbe} |\fz|^{2 \ta' - 2} \bfz 
	- \ta'^2 \de_{1 \al}  \de_{1 \be} M |\fz|^{2 \ta' -2}  
\end{eqnarray}
Which, in the coordinates of Lemma \ref{lemma-ti-ya} simplifies to the following
\begin{eqnarray}
\label{tilde-omega-good-coord}
	\tilde{g}_{\al \bbe} &=& g^0_{\al \bbe} 
	+ K_{,\al \bbe} |\fz|^{2 \ta}
	+ \ta^2 \de_{1 \al}  \de_{1 \be} K |\fz|^{2 \ta -2}
	-
	 M_{,\al \bbe} |\fz|^{2 \ta'}
	- \ta'^2 \de_{1 \al}  \de_{1 \be} M |\fz|^{2 \ta' -2} \nonumber 
\end{eqnarray}
We now have for the first derivatives of the metric that
\begin{eqnarray}
	\tilde{g}_{\al \bbe,\ga} &=& 
	g^0_{\al \bbe, \ga} + K_{,\al \bbe \ga} |\fz|^{2 \ta} 
	+
	\ta \de_{1 \be} K_{,\al \ga} |\fz|^{2 \ta -2} \fz 
	+
	\ta ( \de_{1 \al} K_{,\bbe \ga} + \de_{1 \ga} K_{,\al \bbe} ) |\fz|^{2\ta - 2} \bfz \nonumber \\
	&+&
	\ta^2 (\de_{1 \be} \de_{1 \ga} K_{,\al} + \de_{1 \al} \de_{1 \be} K_{,\ga}
	+ \de_{1 \al} \de_{1 \ga} K_{,\bbe} ) |\fz|^{2 \ta - 2}
	+ \ta^2 ( \ta - 1) \de_{1 \al} \de_{1 \be} \de_{1 \ga} K  |\fz|^{2 \ta - 4} \bfz \nonumber \\
	&-& M_{,\al \bbe \ga} |\fz|^{2 \ta'} 
	-
	\ta' \de_{1 \be} M_{,\al \ga} |\fz|^{2 \ta' -2} \fz 
	-
	\ta' ( \de_{1 \al} M_{,\bbe \ga} + \de_{1 \ga} M_{,\al \bbe} ) |\fz|^{2\ta' - 2} \bfz \nonumber \\
	&-&
	\ta'^2 (\de_{1 \be} \de_{1 \ga} M_{,\al} + \de_{1 \al} \de_{1 \be} M_{,\ga}
	- \de_{1 \al} \de_{1 \ga} M_{,\bbe} ) |\fz|^{2 \ta' - 2}
	- \ta'^2 ( \ta' - 1) \de_{1 \al} \de_{1 \be} \de_{1 \ga} M |\fz|^{2 \ta' - 4} \bfz	
\end{eqnarray}
and in the coordinates of Lemma \ref{lemma-ti-ya} this simplifies to:
\begin{eqnarray}
	\tilde{g}_{\al \bbe,\ga} &=& 
	g^0_{\al \bbe, \ga} + K_{,\al \bbe \ga} |\fz|^{2 \ta} 
	+
	\ta ( \de_{1 \al} K_{,\bbe \ga} + \de_{1 \ga} K_{,\al \bbe} ) |\fz|^{2\ta - 2} \bfz 
	+ \ta^2 ( \ta - 1) \de_{1 \al} \de_{1 \be} \de_{1 \ga} K  |\fz|^{2 \ta - 4} \bfz \nonumber \\
	&-& M_{,\al \bbe \ga} |\fz|^{2 \ta'} 
	-
	\ta' ( \de_{1 \al} M_{,\bbe \ga} + \de_{1 \ga} M_{,\al \bbe} ) |\fz|^{2\ta' - 2} \bfz 
	- \ta'^2 ( \ta' - 1) \de_{1 \al} \de_{1 \be} \de_{1 \ga} M  |\fz|^{2 \ta' - 4} \bfz
\end{eqnarray}

And similarly, the expression for \(g_{\al \bbe, \bde}\) in the coordinates of Lemma \ref{lemma-ti-ya} is:
\begin{eqnarray}
	\tilde{g}_{\al \bbe,\bde} &=& 
	g^0_{\al \bbe, \bde} + K_{,\al \bbe \bde} |\fz|^{2 \ta} 
	+
	\ta (\de_{1 \be} K_{,\al \bde} 
	+
	  \de_{1 \al} K_{,\bbe \bde}) |\fz|^{2 \ta -2} \fz + \de_{1 \de} K_{,\al \bbe}  |\fz|^{2\ta - 2} \bfz \nonumber \\
	&+&
	\ta^2 (\de_{1 \be} \de_{1 \de} K_{,\al} + \de_{1 \al} \de_{1 \be} K_{,\ga}
	+ \de_{1 \al} \de_{1 \de} K_{,\bbe} ) |\fz|^{2 \ta - 2}
	+ \ta^2 ( \ta - 1) \de_{1 \al} \de_{1 \be} \de_{1 \de} K  |\fz|^{2 \ta - 4} \fz \nonumber \\
	&-& M_{,\al \bbe \bde} |\fz|^{2 \ta'} 
	-
	\ta' (\de_{1 \be} M_{,\al \bde} 
	-
	 \de_{1 \al} M_{,\bbe \bde}) |\fz|^{2 \ta' -2} \fz +
	 \de_{1 \de} M_{,\al \bbe}  |\fz|^{2\ta' - 2} \bfz \nonumber \\
	&-&
	\ta^2 (\de_{1 \be} \de_{1 \de} M_{,\al} + \de_{1 \al} \de_{1 \be} M_{,\ga}
	- \de_{1 \al} \de_{1 \de} M_{,\bbe} ) |\fz|^{2 \ta - 2}
	- \ta'^2 ( \ta' - 1) \de_{1 \al} \de_{1 \be} \de_{1 \de} M  |\fz|^{2 \ta - 4} \fz 
\end{eqnarray}

Also, for the second derivative terms we have in the special coordinates that:
\begin{eqnarray}
	\tilde{g}_{\al \bbe, \ga \bde} \text{  } &=& 
	g^0_{\al \bbe, \ga \bde} + K_{,\al \bbe \ga \bde} |\fz^1|^{2 \ta} 
	+ 
	\ta \big( \de_{1 \be} K_{,\al \ga \bde} + \de_{1 \de} K_{,\al \bbe \ga}  \big) |\fz^1|^{2 \ta - 2} \fz^1	\nonumber \\
	&+&
	\ta \big( \de_{1 \ga} K_{,\al \bbe \bde} + \de_{1 \al} K_{,\bbe \ga \bde} \big) |\fz^1|^{2\ta -2} \bfz_1		 \nonumber \\
	&+& \ta^2    
	\big ( \de_{1 \al} \de_{1\de} K_{,\ga \bde} 
	+ \de_{1 \ga} \de_{1 \de} K_{,\al \bbe}
	+ \de_{1 \al} \de_{1 \be} K_{,\ga \bde}
	+ \de_{1 \be} \de_{1 \ga} K_{,\al \bde}
		\big) |\fz^1|^{2\ta -2}  \nonumber \\
	&+& \ta (1 - \ta) \de_{1 \be} \de_{1 \de}  K_{,\al \ga} |\fz_1|^{2\ta - 4} \fz_1^2
	+ \ta ( 1- \ta) \de_{1 \al} \de_{1 \ga} K_{,\bbe \bde}  |\fz_1|^{2\ta - 4} \bfz_1^2
	\big ) |\fz_1|^{2 \ta -2}
	\nonumber \\
	&+& \ta^2 ( 1 - \ta) \de_{1 \al} \de_{1 \ga} 
	\big(
		\de_{1 \de} K_{,\bbe}
		+
		\de_{1 \be} K_{,\bde}
	\big) |\fz^1|^{2 \ta -4} \bfz_1
	+ 
	\ta^2 (1 -\ta)  \de_{1 \be} \de_{1 \de} 
	\big(
		\de_{1 \ga} K_{,\al} + \de_{1 \al} K_{,\ga}
	\big) |\fz^1|^{2 \ta - 4} \fz^1
	\nonumber \\
	&+& \ta^2 ( 1 - \ta)^2 \de_{1 \al} \de_{1 \be} \de_{1 \ga} \de_{1 \de} K |\fz^1|^{2 \ta - 4}  			\nonumber \\
	&-&   
	 M_{,\al \bbe \ga \bde} |\fz^1|^{2 \ta'} 
	- 
	\ta' \big( \de_{1 \be} M_{,\al \ga \bde} + \de_{1 \de} K_{,\al \bbe \ga}  \big) |\fz^1|^{2 \ta' - 2} \fz^1		
	-
	\ta' \big( \de_{1 \ga} M_{,\al \bbe \bde} + \de_{1 \al} M_{,\bbe \ga \bde} \big) |\fz^1|^{2\ta' -2} \bfz_1		 \nonumber \\
	&-& \ta'^2    
	\big ( \de_{1 \al} \de_{1\de} M_{,\ga \bde} 
	- \de_{1 \ga} \de_{1 \de} M_{,\al \bbe}
	- \de_{1 \al} \de_{1 \be} M_{,\ga \bde}
	- \de_{1 \be} \de_{1 \ga} M_{,\al \bde}
	\big) |\fz^1|^{2\ta' -2} \nonumber \\
	&-& \ta' (1 - \ta') \de_{1 \be} \de_{1 \de}  M_{,\al \ga} |\fz^1|^{2\ta' - 4} \fz_1^2
	- \ta' ( 1- \ta') \de_{1 \al} \de_{1 \ga} M_{,\bbe \bde}  |\fz^1|^{2\ta' - 4} \bfz_1^2
	\big ) |\fz^1|^{2 \ta' -2}
	\nonumber \\
	&-& \ta'^2 ( 1 - \ta) \de_{1 \al} \de_{1 \ga} 
	\big(
		\de_{1 \de} M_{,\bbe}
		+
		\de_{1 \be} M_{,\bde}
	\big) |\fz^1|^{2 \ta' -4} \bfz_1 \nonumber \\
	&-& 
	\ta'^2 (1 -\ta')  \de_{1 \be} \de_{1 \de} 
	\big(
		\de_{1 \ga} M_{,\al} + \de_{1 \al} M_{,\ga}
	\big) |\fz^1|^{2 \ta' - 4} \fz^1
	- \ta'^2 ( 1 - \ta')^2 \de_{1 \al} \de_{1 \be} \de_{1 \ga} \de_{1 \de} M |\fz^1|^{2 \ta' - 4}
\end{eqnarray}

Which in the special coordinate system adopted at a given point becomes:
\begin{eqnarray} \label{g-second-deriv}
	\tilde{g}_{\al \bbe, \ga \bde} &=& 
	g^0_{\al \bbe, \ga \bde} + K_{,\al \bbe \ga \bde} |\fz^1|^{2 \ta} 
	+ 
	\ta \big( \de_{1 \be} K_{,\al \ga \bde} + \de_{1 \de} K_{,\al \bbe \ga}  \big) |\fz^1|^{2 \ta - 2} \fz^1			 \nonumber \\
	&+&
	\ta \big( \de_{1 \ga} K_{,\al \bbe \bde} + \de_{1 \al} K_{,\bbe \ga \bde} \big) |\fz^1|^{2\ta -2} \bfz_1		 \nonumber \\
	&+& \ta^2    
	\big ( \de_{1 \al} \de_{1\de} K_{,\ga \bde} 
	+ \de_{1 \ga} \de_{1 \de} K_{,\al \bbe}
	+ \de_{1 \al} \de_{1 \be} K_{,\ga \bde}
	+ \de_{1 \be} \de_{1 \ga} K_{,\al \bde}
		\big) |\fz^1|^{2\ta -2}  \nonumber \\
	&+& \ta^2 ( 1 - \ta)^2 \de_{1 \al} \de_{1 \be} \de_{1 \ga} \de_{1 \de} K |\fz^1|^{2 \ta - 4} 			\nonumber \\
	&-&   
	 M_{,\al \bbe \ga \bde} |\fz^1|^{2 \ta'} 
	- 
	\ta \big( \de_{1 \be} M_{,\al \ga \bde} + \de_{1 \de} K_{,\al \bbe \ga}  \big) |\fz^1|^{2 \ta' - 2} \fz^1			 \nonumber \\
	&-&
	\ta' \big( \de_{1 \ga} M_{,\al \bbe \bde} + \de_{1 \al} M_{,\bbe \ga \bde} \big) |\fz^1|^{2\ta' -2} \bfz_1		 \nonumber \\
	&-& \ta'^2    
	\big ( \de_{1 \al} \de_{1\de} M_{,\ga \bde} 
	- \de_{1 \ga} \de_{1 \de} M_{,\al \bbe}
	+\de_{1 \al} \de_{1 \be} M_{,\ga \bde}
	+ \de_{1 \be} \de_{1 \ga} M_{,\al \bde} 
		\big) |\fz^1|^{2\ta' -2} \nonumber \\
	&-& \ta'^2 ( 1 - \ta')^2 \de_{1 \al} \de_{1 \be} \de_{1 \ga} \de_{1 \de} M |\fz^1|^{2 \ta' - 4}
\end{eqnarray}

In order to study the behaviour of the terms with the inverse of the metric  close to the divisor in its coordinate representation we shall need the following observation.
\begin{lemma} \label{good-metric-inverse}
	For the inverse matrix \(g^{\m \bn}\) we have:
	\begin{itemize}
		\item \(g^{1 \bar{1}} = {1 \over \ta^2} |\fz^1|^{2(1 - \ta)} \left(
		 1 + {\ta'^2 \over \ta^2}  |\fz^1|^{2(\ta' - \ta)} + \calO \big(   |\fz^1|^{2(1 - \ta)}\big) \right)  \),
		\item \(g^{\m \bar{1}} = \calO \big( |\fz^1|^{2(1 - \ta)} \big)\), for \(\m \neq 1\).	
	\end{itemize}
\end{lemma}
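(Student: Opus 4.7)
The plan is to explicitly diagonalise the leading part of $\tilde g_{\al\bbe}$ near the divisor and then invert by the Schur complement. First, working in the coordinates supplied by Lemma \ref{good-coord}, I would perform one further holomorphic change of variables that (i) rescales $\fz^1$ to set $g^0_{1\bar 1}(p)=1$, (ii) shifts $\fz^1\mapsto \fz^1+\sum_{\be\neq 1}c_\be \fz^\be$ to kill the off-diagonal components $g^0_{1\bbe}(p)$, and (iii) applies a unitary transformation in $(\fz^2,\dots,\fz^n)$ to diagonalise the lower-right block; this preserves the defining equation $D=[\fz^1=0]$ and the special-coordinate normalisations for $\phi$. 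With this setup $g^0_{\al\bbe}(p)=\de_{\al\be}$ and the formula \eqref{tilde-omega-good-coord} reduces, at the base point $p$, to
\begin{equation*}
\tilde g_{\al\bbe}(p) = \de_{\al\be}+\ta^2\de_{1\al}\de_{1\be}|\fz^1|^{2\ta-2}-\ta'^2\de_{1\al}\de_{1\be}|\fz^1|^{2\ta'-2}+O(|\fz^1|^{2\ta}),
\end{equation*}
where the $O(|\fz^1|^{2\ta})$ absorbs the terms $K_{,\al\bbe}|\fz|^{2\ta}$ and $M_{,\al\bbe}|\fz|^{2\ta'}$ (these survive because Lemma \ref{good-coord} kills only the \emph{holomorphic} second derivatives, not the mixed ones).

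Next I would partition $\tilde g$ as a block matrix $\bigl(\begin{smallmatrix}A & B\\ B^* & C\end{smallmatrix}\bigr)$, with $A=\tilde g_{1\bar 1}$, $B=(\tilde g_{1\bbe})_{\be\neq 1}$ and $C=(\tilde g_{\al\bbe})_{\al,\be\neq 1}$. The asymptotics above give
\begin{equation*}
A=\ta^2|\fz^1|^{2\ta-2}\Bigl(1-\tfrac{\ta'^2}{\ta^2}|\fz^1|^{2(\ta'-\ta)}+O(|\fz^1|^{2-2\ta})\Bigr),\qquad B=O(|\fz^1|^{2\ta}),\qquad C=\mathrm{Id}+O(|\fz^1|^{2\ta}).
\end{equation*}
Applying the Schur complement formula, $g^{1\bar 1}=(A-BC^{-1}B^*)^{-1}$. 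Since $BC^{-1}B^*=O(|\fz^1|^{4\ta})$ and $A^{-1}=O(|\fz^1|^{2-2\ta})$, the correction $A^{-1}BC^{-1}B^*=O(|\fz^1|^{2+2\ta})$ is of strictly lower order than the announced $O(|\fz^1|^{2(1-\ta)})$ error. Expanding $A^{-1}$ as a geometric series then yields
\begin{equation*}
g^{1\bar 1}=\tfrac{1}{\ta^2}|\fz^1|^{2(1-\ta)}\Bigl(1+\tfrac{\ta'^2}{\ta^2}|\fz^1|^{2(\ta'-\ta)}+O(|\fz^1|^{2(1-\ta)})\Bigr),
\end{equation*}
which is the first claim. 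For the mixed entries, the off-diagonal block of the inverse satisfies $g^{\m\bar 1}=-\bigl(C-B^*A^{-1}B\bigr)^{-1}B^*A^{-1}$ with $\m\neq 1$; this is $O(1)\cdot O(|\fz^1|^{2\ta})\cdot O(|\fz^1|^{2-2\ta})=O(|\fz^1|^2)$, a fortiori $O(|\fz^1|^{2(1-\ta)})$ as claimed.

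The routine part is the algebra of the Schur complement. The only delicate bookkeeping is making sure that the subdominant term $-\ta'^2|\fz^1|^{2\ta'-2}$ in $A$ survives with the right coefficient in the expansion of $A^{-1}$; since this is precisely the next-to-leading order in $|\fz^1|^{2(\ta'-\ta)}$, which is smaller than $1$ but larger than the $O(|\fz^1|^{2-2\ta})$ error coming from the smooth background, one has to verify that no other term in \eqref{tilde-omega-good-coord} can contribute to this specific order. This follows because the only sources of $|\fz^1|$-powers between $|\fz^1|^{2\ta-2}$ and $|\fz^1|^0$ in $\tilde g_{1\bar 1}$ are exactly the $\ta^2|\fz^1|^{2\ta-2}$ and $\ta'^2|\fz^1|^{2\ta'-2}$ terms, since the remaining contributions $K_{,1\bar 1}|\fz^1|^{2\ta}$ and $M_{,1\bar 1}|\fz^1|^{2\ta'}$ are bounded or smaller. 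With this verification the expansion is rigorous and the lemma follows.
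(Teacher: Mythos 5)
Your computation is, at its core, the same as the paper's: the paper writes \(g^{1\bar 1}=\det g^{*}_{11}/\det g\), expands \(\det g\) along the first row to isolate the single unbounded entry \(\tilde g_{1\bar 1}=\ta^2|\fz^1|^{2\ta-2}-\ta'^2|\fz^1|^{2\ta'-2}+O(1)\), and expands the reciprocal geometrically (compare \eqref{g-inverse-expansion}); your Schur-complement presentation is the same calculation in block form, and your order bookkeeping --- the correction \(BC^{-1}B^{*}\) entering only at relative order \(|\fz^1|^{2+2\ta}\), and the term \(|\fz^1|^{2(\ta'-\ta)}\) sitting strictly between \(1\) and the \(|\fz^1|^{2-2\ta}\) error --- is correct and matches the paper's.

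One caution: the extra normalisation in your first step is both unnecessary and, as written, inconsistent with the coordinates of Lemma \ref{good-coord}, on which the exact constant in the lemma depends. Rescaling \(\fz^1\) so that \(g^{0}_{1\bar 1}(p)=1\) destroys the normalisation \(K(p)=1\) and replaces the leading coefficient \(\ta^2\) by \(\ta^2|\la|^{-2\ta}\) for the rescaling factor \(\la\); the constant \(1/\ta^2\) in the first bullet is precisely what later produces the cancellation of the \(|\fz^1|^{2\ta-4}\) terms in the curvature, so it cannot be lost. Likewise the shear \(\fz^1\mapsto\fz^1+\sum_{\be\neq 1}c_\be\fz^\be\) does not fix \([\fz^1=0]\); to kill \(g^{0}_{1\bbe}(p)\) while preserving the divisor you must shear the remaining coordinates, \(\fz^\be\mapsto\fz^\be+c_\be\fz^1\), instead. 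Neither normalisation is actually needed: boundedness of \(g^0\) and of its inverse already places \(g^{0}_{1\bar 1}\) inside the stated \(O(|\fz^1|^{2(1-\ta)})\) relative error and gives \(B=O(1)\), hence \(g^{\m\bar 1}=O(|\fz^1|^{2-2\ta})\), which is all the second bullet claims. With the superfluous normalisation deleted, your argument coincides with the paper's proof.
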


\begin{proof}
			We shall derive the aymptotic behaviour of the elements of \(g^{i \bj}\) by studying the components \(g^{i\bj}\) as quotients \({\det g^*_{i j} \over \det g} \), where \(g^*_{ij}\) is the minor obtained by removing the \(i\)-th row and \(j\)-th column.
			Since in the coordinate systems introduced in Lemma \ref{good-coord} the only unbounded component of the metric tensor will be \(g^\eps_{1 \bar{1}} = 2 (\ta - 1) \calO( |\fz^1|^{2(\ta -1)}) + \text{bounded terms}\), we see by expanding the determinant in the first row that:
			\[
				\det g = \big[ 
				\ta^2|\fz^1|^{2(\ta -1)}
				-
				\ta'^2 |\fz^1|^{2(\ta' -1)}
				\big] \det g^*_{11} + \text{bounded terms}
			\]
			and that \(\det g^*_{11}\) is bounded.
			Therefore, 
			\({ \det g^*_{11} \over \det g} = 
			{\det g^*_{11}  \over [ \ta^2 |\fz^1|^{2(\ta -1)} 
			- 
			\ta'^2 |\fz^1|^{2(\ta' -1)}] \det g^*_{11}  + a_1}
			\)
			where \(a_j\)'s are bounded functions. One can now expand the quotient as 
			\begin{eqnarray}
			\label{g-inverse-expansion}
				g^{1 \bar{1}}={ \det g^*_{11} \over \det g} &=& 
				{1 \over \ta^2} |\fz^1|^{2(1 - \ta)} \Big[
				1 
				+
				{\ta'^2 \over \ta^2}   |\fz^1|^{2(\ta' - \ta)}
				+
				{\ta'^2 \over \ta^2}{a_1 \over \det g_{11}^*} 
				 K|\fz^1|^{2(1 - \ta)}  \nonumber \\
				&+& \calO \big(
				|\fz^1|^{4( \ta' -\ta )} \big)
				\Big]
			\end{eqnarray}
			As it will become clear in the calculations of the curvature tensor, it is indeed the first order expansion of the fraction that is of importance for our purpose. 
			Indeed, when we do not have the extra term \( |\fz_1|^{2\ta}\) in the potential, 
			the term that could produce the negative infinity in the curvature is the term \( {-a_1 \over \det g^*_{11}} |\fz_1|^{2(1 - \ta)}\).
			However, in our case this latter term is dominated by the term \( |\fz_1|^{2(\ta' - \ta)}\).
			
			For components \(g^{i \bar{1}}, i \neq 1\) we can also derive a similar expression, the difference being that we do not need first order expansion, the signs of the terms appearing are not known, and do not play a r\^{o}le in the unboundedness of curvature tensor.		
\end{proof}

Having found the derivatives and the inverse of the metric (as a matrix in local coordinates) we can now turn to finding the curvature. 		
Let us recall that we have the following formula for the curvature of a K\"ahler metric:
\begin{equation}
			\label{curv-form}
			R_{\al \bbe \ga \bde} = -  g_{\al \bbe, \ga \bde} + g^{\m \bn} g_{\al \bn, \ga} g_{\m \bbe, \bde}
\end{equation}

It is not hard to see that the only terms in the second derivative, \(\tilde{g}_{\al \bbe, \ga \bde}\), which need to be considered are the following: 
\(  \de_{1 \al} \de_{1 \be} \de_{1 \ga} \de_{1 \de} \big (   \ta^2 ( 1 - \ta)^2  K |\fz^1|^{2 \ta - 4}  - \ta'^2 ( 1 - \ta')^2 M |\fz^1|^{2 \ta' - 4} \big )\).
This  term is non-zero only when all indices are equal to 1.
In the first order expression \(g^{\m \bn} g_{\al \bn, \ga} g_{\m \bbe, \bde}\) one can first notice that \(g^{1 \bar{1}} = \calO( |\fz^1|^{2 - 2\ta} ) \).
So, the only term   in the product \(g_{\al \bn, \ga} g_{\m \bbe, \bde}\) 
that might stay unbounded after multiplying into the relevant components of \(v\) and \(w\) is the following
\begin{eqnarray}
	\big(  
		 \ta^2 ( \ta - 1) \de_{1 \al} \de_{1 \be} \de_{1 \de} K  |\fz|^{2 \ta - 4} \fz
	&-&
	\ta'^2 ( \ta' - 1) \de_{1 \al} \de_{1 \be} \de_{1 \de} M  |\fz|^{2 \ta' - 4} \fz 
 \big) 
	\big(
	  \ta^2 ( \ta - 1) \de_{1 \al} \de_{1 \be} \de_{1 \ga} K  |\fz|^{2 \ta - 4} \bfz 	
	  \nonumber \\
	  &-& 
	 \ta'^2 ( \ta' - 1) \de_{1 \al} \de_{1 \be} \de_{1 \ga} M  |\fz|^{2 \ta' - 4} \bfz 	  
	\big)
\end{eqnarray}
and that only when \(\al = \be = \ga = \de =1\).
It takes a straightforward verification to see that after multiplying by relevant components of \(v\) and \(w\), the only potentially unbounded terms are in \(R_{1 \bar{1} 1 \bar{1}} \).

Let us study the terms in \(R_{\al \bbe \ga \bde}  v^\al \bar{v}^{\be} w^\ga \bar{w}^\de\) separately. 
First, we take 
\begin{equation} \label{curv-1}
	-g_{\al \bbe, \ga \bde} v^\al \bar{v}^{\be} w^\ga \bar{w}^\de
\end{equation}
By directly inspecting the terms in 
\eqref{g-second-deriv} we see that the only unbounded term in  
\eqref{curv-1} is the highest order term
\[
  \big(  - \ta^2 ( 1 - \ta)^2  K |\fz^1|^{2 \ta - 4}
  + \ta'^2 ( 1 - \ta')^2  M |\fz^1|^{2 \ta' - 4}
    \big) v^1 \bar{v}^1 w^1 \bar{w}^1.
\]

Behaviour of the term 
\(g^{\m \bn} g_{\al \bn, \ga} g_{\m \bbe, \bde} v^\al \bar{v}^{\be} w^\ga \bar{w}^\de\) can similarly be understood as follows: all the terms appearing in the product are bounded except the term

\begin{eqnarray}
	g^{\m \bn} 
	\de_{1 \m} \de_{1 \be} \de_{1 \de}
	\de_{1 \al} \de_{1 \n} \de_{1 \ga}
	\big( 
	 \ta^2 ( \ta - 1) K  |\fz^1|^{2 \ta - 4} \fz^1 
	&-&
	\ta'^2 ( \ta' - 1)  M  |\fz^1|^{2 \ta' - 4} \fz^1
	\big)
	\big(
	  \ta^2 ( \ta - 1)  K  |\fz^1|^{2 \ta - 4} \bfz_1	
	  \nonumber \\
	  &-& 
	 \ta'^2 ( \ta' - 1)  M  |\fz^1|^{2 \ta' - 4} \bfz_1	 
	\big) v^\al \bar{v}^\be w^\ga \bar{w}^\de \nonumber
\end{eqnarray}
which, using \eqref{g-inverse-expansion} can be written as
\begin{eqnarray}
	{1 \over \ta^2}  \Big(
				1 
				&+&
				{\ta'^2 \over \ta^2}  |\fz^1|^{2(\ta' - \ta)}
				+
				\calO (|\fz^1|^{2(1 - \ta)} )
				\Big)
				 \big[
		\ta^4 (1-\ta)^2  |\fz^1|^{2 \ta -4}
		\nonumber \\
		&-&
		2 \ta^2 \ta'^2 (1 -\ta) (1-\ta') |\fz^1|^{4 \ta' - 2 \ta -4}
		+
		\ta'^4 ( 1- \ta')^2  |\fz^1|^{ 2 \ta' - 4}
	\big]
	v^1 \bar{v}^1 w^1 \bar{w}^1  \nonumber \\
	&=&
	\big( \ta^2 (1-\ta)^2 |\fz^1|^{2 \ta -4} 
	-
	2 \ta'^2 (1-\ta)(1-\ta') |\fz^1|^{2 \ta' - 4}
	+
	{\ta'^4 \over \ta^2} ( 1 - \ta'^2) |\fz^1|^{4\ta' - 2 \ta - 4}    \nonumber \\
	&+&
	\ta'^2 ( 1- \ta)^2 |\fz^1|^{2\ta'  - 4}
	- 2 {\ta'^4 \over \ta^2} ( 1 - \ta) (1 - \ta') |\fz^1|^{6 \ta' - 4\ta - 4}
	+{\ta'^6 \over \ta^4}(1 - \ta')^2 |\fz^1|^{4\ta' - 2 \ta -4}
	 \nonumber \\
	 &+&
	 \calO(|\fz^1|^{-2}) \big)
	v^1 \bar{v}^1 w^1 \bar{w}^1 \nonumber
\end{eqnarray}

Having obtained these expressions for the individual terms appearing in the components of the curvature tensor, we can now put them together to obtain the following expression for two unit vectors \(v, w\)
\begin{eqnarray}
	R_{1 \bar{1} 1 \bar{1}} v^1 \bar{v}^1 w^1 \bar{w}^1 &=& 
	\calO (1) + \Big[
	- 
	\ta^2 ( 1 - \ta)^2  |\fz^1|^{2 \ta - 4} 
	+
	 \ta'^2 ( 1 - \ta')^2   |\fz^1|^{2 \ta' - 4} 
	\nonumber \\
	&+&	\ta^2 (1-\ta)^2 |\fz^1|^{2 \ta -4} 
	-
	2 \ta'^2 (1-\ta)(1-\ta') |\fz^1|^{2 \ta' - 4}
	+
	{\ta'^4 \over \ta^2} ( 1 - \ta'^2) |\fz^1|^{4\ta' - 2 \ta - 4}    \nonumber \\
	&+&
	\ta'^2 ( 1- \ta)^2 |\fz^1|^{2\ta'  - 4}
	- 2 {\ta'^4 \over \ta^2} ( 1 - \ta) (1 - \ta') |\fz^1|^{6 \ta' - 4\ta - 4}
	+{\ta'^6 \over \ta^4}(1 - \ta')^2 |\fz^1|^{4\ta' - 2 \ta -4}
	 \nonumber \\
	 &+&
	 \calO(|\fz^1|^{-2}) 
	 \Big] v^1 \bar{v}^1 w^1 \bar{w}^1 \nonumber	
\end{eqnarray}

As one may observe, starting from the lowest power, the terms with \(|\fz^1|^{2 \ta -4}\), cancel, and the next lowest power, \(|\fz^1|^{2 \ta' - 4}\), has a positive coefficient: \(\ta'^2(1 - \ta')^2 + \ta'^2(1 - \ta)^2 - 2 \ta'^2 (1- \ta) (1 - \ta') = \ta'^2 (\ta' - \ta)^2 > 0 \). 
This means we can disregard the terms with larger exponents altogether
and the behaviour of the curvature is dominated by the positive term \( (\ta'- \ta)^2 |\fz^1|^{2 \ta' - 4}\).
In particular, this means as \(|\fz^1| \to 0\), the \(	R_{1 \bar{1} 1 \bar{1}} v^1 \bar{v}^2 w^1 \bar{w}^1\) blows up in the positive direction and is bounded from below.

Finally, it should be noted that the components of the curvature tensor other than the \(1 \bar{1} 1 \bar{1}\) component are bounded when multiplied by the corresponding  elements of \(g^{\al \bbe}\).
\qed \\

\begin{remark}
 	Althoug we have not detalied this here, but one can repeat similar calculations for a metric of the form 
 	\(\om_0 + dd^c \big( |s|_h^{2 \ta} \pm \fc |s|_h^{2\ta'} \big) \), \(\fc>0\), and observe that for such metric the curvature indeed blows up in the negative or the positive direction -depending on the sign before \(\fc\)- at the rate of \(|s|_h^{2\ta' - 4}\).
 	In particular, this gives an example of a smooth edge-cone metric whose curvature becomes unbounded from either below or above close to the divisor at a rate faster than \(|s|_h^{-2}\).
This also means one can construct such edge-cone metrics even when \(\ta < {1 \over 2}\).
We find it worthwhile to emphasise that this phenomenon is not exclusive to higher dimensions.  In an identical fashion it is indeed possible to construct cone metrics on \(\bbC\) whose curvatures are unbounded from below or above.
\end{remark}

\section{Proof of the main results-Solving the equation}

Having made the observations in \textsection 3 and 4, we can now prove Theorem \ref{thm-1-2}.

\begin{proof}[Proof of Theorem \ref{thm-1-2}]
In order to do so, we shall approximate the equation by a family of equations with smooth components. 
In this section we first estiablish uniform 
 {\it a priori } estimates which will allow taking limit of the family of solutions.

As we have mentioned before, 
the way we solve the equation with edge reference metric is by approximating the edge metric by a family of smooth metrics and deriving estimates independent of the parameter of the sequence.
We also take a family of smooth functions \(\{ f_\eps \}_\eps\) approximating the source term  \(f\).
That is, we solve the following family of equations
\begin{equation}\label{5-1}
	(\tilde{\om}_\eps + dd^c \phi_\eps)^n = \tilde{\om}_\eps^n e^{f_\eps}
\end{equation}

It is not hard to see that the right hand side converges in \(L^p(\om_0)\) for some \(p\) depending on the angle. 
The fundamental theorem of Ko\l odziej \cite{ko} on the stability in \(L^p\) of the Monge-Amp\`ere operator  comes to our aid to guarantee that since the right hand side converges in \(L^p\),
the potentials obtained as solutions converge to the unique H\"older continuous solution.
This also takes care of the \(L^\infty\) estimate automatically.

Just as in the classical case, in order to derive estimates on the complex hessian, we derive an upper bound on the laplacian. This is the content of Theorem \ref{thm-5-1}.

Finally, we need to derive an estimate on the modulus of continuity of the second derivative, namely, the \(C^{2,\tta}_\ta\) estimates {\it \`a la} Evans and Krylov.
This will be done in the following section.
One can then repeat the usual method based on takign a sequence of solutions, \( \{\phi_{\eps_j} \}_j \),
and if necessary pass to a subsequence and by evoking the 
uniform estimates prove that there is a solution as \(\eps \to 0^+\).
This will thus conclude the proof of Theorem 
\ref{thm-1-2}.
\end{proof}

Without mentioning, the functions and metric appearing below are assumed to be the ones corresponding to the \(\eps\)-approximation of the equation.

\begin{theorem}
	\label{thm-5-1}
	Let \(\phi\) be a \(C^3\) solution of Equation \ref{5-1}.
	Then, we have the following {\it a priori} bound:
	\begin{equation}
		\Vert dd^c \phi_\eps \Vert_{\om_\eps} \leq C
	\end{equation}
	where \(C= C \left (\Vert \phi \Vert_{L^\infty}, \inf R^\eps(.,.,.,.), \inf f_\eps, \inf (\De_{\om_\eps} f_\eps)^- \right)\).
\end{theorem}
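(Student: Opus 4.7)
The plan is to apply the classical Pogorelov--Aubin--Yau Laplacian computation to the smooth, approximating equation \eqref{5-1}, exploiting the two features of \(\tilde{\om}_\eps\) established earlier: by Lemma \ref{good-metric-lemma} its bisectional curvature is bounded below \emph{uniformly} in \(\eps\), and by the discussion of \textsection \ref{mise-en-equation} the quantity \(\De_{\tilde{\om}_\eps} f_\eps\) admits a lower bound that is also uniform in \(\eps\). Together with the assumed \(L^\infty\) control on \(\phi_\eps\), these are exactly the ingredients that the classical scheme requires.

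Concretely, set \(\om' := \tilde{\om}_\eps + dd^c \phi_\eps\) and consider the auxiliary quantity
\[
u := \log \tr_{\tilde{\om}_\eps} \om' - A\, \phi_\eps,
\]
where \(A>0\) is a constant to be fixed below. Writing \(\De'\) for the Laplacian of \(\om'\), the standard Yau-type computation yields
\[
\De'\bigl(\log \tr_{\tilde{\om}_\eps}\om'\bigr) \geq \frac{\De_{\tilde{\om}_\eps} f_\eps}{\tr_{\tilde{\om}_\eps}\om'} - C_1\, \tr_{\om'}\tilde{\om}_\eps,
\]
where \(-C_1\) is any uniform lower bound for the bisectional curvature of \(\tilde{\om}_\eps\); combined with the pointwise identity \(\De' \phi_\eps = n - \tr_{\om'}\tilde{\om}_\eps\), one obtains
\[
\De' u \geq \frac{\De_{\tilde{\om}_\eps} f_\eps}{\tr_{\tilde{\om}_\eps}\om'} + (A - C_1)\, \tr_{\om'}\tilde{\om}_\eps - An.
\]

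Choose \(A = C_1 + 1\) and apply the maximum principle at a point \(p_0\) where \(u\) attains its supremum (the smoothness of the \(\eps\)-approximation makes this routine). At \(p_0\) this inequality gives an upper bound on \(\tr_{\om'}\tilde{\om}_\eps(p_0)\) depending only on \(C_1\) and \(\inf(\De_{\tilde{\om}_\eps} f_\eps)^-\); the arithmetic-geometric inequality combined with the Monge--Amp\`ere equation \(\om'^n = e^{f_\eps}\tilde{\om}_\eps^n\) then yields
\[
\tr_{\tilde{\om}_\eps}\om'(p_0) \leq \frac{1}{(n-1)!}\bigl(\tr_{\om'}\tilde{\om}_\eps(p_0)\bigr)^{n-1} e^{f_\eps(p_0)}.
\]
Since for every \(p\) one has \(\log \tr_{\tilde{\om}_\eps}\om'(p) \leq u(p_0) + A\phi_\eps(p)\), rearranging and using \(\|\phi_\eps\|_{L^\infty}\) gives the bound claimed in the theorem.

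The main obstacle in transporting the smooth argument to this setting is precisely the control on the two ``bad'' quantities \(C_1\) and \(\inf \De_{\tilde{\om}_\eps} f_\eps\): in the naive reference \(\om_\ta\) neither is finite for \(\ta > \tfrac{1}{2}\). The passage to \(\tilde{\om}_\eps\) from \eqref{good-metric-eq} remedies the curvature issue (Lemma \ref{good-metric-lemma}), and the observation in \textsection \ref{mise-en-equation} that the leading singularity of \(\varrho(\tilde{\om}_\eps)\) carries the favourable sign cures the Ricci-potential issue. The maximum principle must be applied slightly away from the singular divisor and one must then check that the resulting bound survives the limit \(\eps \to 0^+\); this is standard given the uniformity of all constants in \(\eps\) which Lemma \ref{good-metric-lemma} was designed to provide.
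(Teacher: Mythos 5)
Your proposal is correct and takes essentially the same route as the paper: the same auxiliary function (your \(\log\tr_{\tilde{\om}_\eps}\om' - A\phi_\eps\) versus the paper's \(\log(n+\De\phi) - C_2\phi\), which coincide since \(\tr_{\tilde{\om}_\eps}\om' = n+\De\phi\)), the same differential inequality requiring only a lower bound on the bisectional curvature of \(\tilde{\om}_\eps\) together with a lower bound on \(\De_{\tilde{\om}_\eps}f_\eps\), and the same maximum-principle conclusion. The only cosmetic difference is that the paper spells out the P\u{a}un regrouping of the curvature terms, \(2\sum_{\al<\be}\big(\tfrac{1+\phi_{\al\bal}}{1+\phi_{\be\bbe}}+\tfrac{1+\phi_{\be\bbe}}{1+\phi_{\al\bal}}-2\big)R_{\al\bal\be\bbe}\), which your appeal to the ``standard Yau-type computation'' implicitly uses to dispense with any upper bound on the scalar curvature.
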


\begin{proof}
Other than a few observations, the argument is very similar to the classical argument. The reader can consult \textsection \cite{si}.
We shall drop the subscript \(\eps\) in the rest of the proof.
Let us set \(\De\) and \(\De'\) to be the laplacian associated to  \(\tilde{\om}\) and \(\tilde{\om} + dd^c \phi\) respectively.
In the rest of this section, we drop the subscript \(\eps\).
Let us start with the following well-known inequality:
	\begin{equation}
		\De' \log ( n + \De \phi)
		\geq
		{1 \over n + \De \phi} \big[ \De f + \sum_{\al, \be} \big( 
		- R_{\al \bal \be \bbe} + R_{\al \bal \be \bbe} {1 + \phi_{\al \bal} \over 1 + \phi_{\be \bbe}}
		\big)
   \big]
	\end{equation}
	The reader can refer to \textsection 3.2 of \cite{si} for example.	Using the symmetries of the curvature tensor, we now rewrite the expressions containing curvature terms as 
	\begin{eqnarray}
		\sum_{\al, \be} \big( - R_{\al \bal \be \bbe} + R_{\al \bal \be \bbe} {1 + \phi_{\al \bal} \over 1 + \phi_{\be \bbe}} \big)
		&=& 2\sum_{\al < \be} \big(
			{1 + \phi_{\al \bal} \over 1 + \phi_{\be \bbe}}
			+
			{1 + \phi_{\be \bbe} \over 1 + \phi_{\al \bal}}
			-
			2
		\big)R_{\al \bal \be \bbe} \nonumber 	\end{eqnarray}
	In the original way the laplacian estimate was derived, it depended on the upper bound of the scalar curvature and the lower bound of the bi-sectional curvature.
	This observation, which is already used in \cite{pa}, allows dropping the former requirement.
	
	Let \(C\) be a positive constant so that \(R_{\al \bal \be \bbe} u^\al \bar{u}^\al v^\be \bar{v}^\be \geq -C|u|_\om|v|_\om \).
	Then, upon noticing that the terms 
	\({1 + \phi_{\al \bal} \over 1 + \phi_{\be \bbe}} 	+
			{1 + \phi_{\be \bbe} \over 1 + \phi_{\al \bal}} -2 \)
			are all non-negative, we deduce that 
		\begin{eqnarray}
		\De' \log ( n + \De \phi - C_2 \phi )
		&\geq&
		{1 \over n + \De \phi} \big[ \De f - \sum_{\al, \be} C \big( {1 + \phi_{\al \bal} \over 1 + \phi_{\be \bbe}} 	+
			{1 + \phi_{\be \bbe} \over 1 + \phi_{\al \bal}} -2 
		\big) 
   \big]  - C_2n + C_2 \sum_{\al} {1 \over 1 + \phi_{\al \bal}} \nonumber \\
   		&\geq&   		
   		{(\De f)^- \over n + \De \phi} - 2C \sum_\al {1 \over 1 + \phi_{\al \bal}} - 2 Cn^2
   		- C_2n + C_2 \sum_{\al} {1 \over 1 + \phi_{\al \bal}} 
		\end{eqnarray}
		In the expression above, for a function \(v\), \(v^-:= \min \{ v, 0 \}\).
		
	After choosing \(C_2\) to be sufficiently large, it will require a standard application of the maximum principle to conclude the argument. We deduce that the quantity \(\De \phi\) is bounded by a constant \(C_4=C_4((\De f)^-, \inf_{\al, \be} R_{\al \bal \be \bbe}, \Vert \phi \Vert_0)\). 
	All of these quantities are finite by the fact that we have chosen a reference metric whose curvature is bounded from below, along with the observations in Propositions \ref{good-metric-pro}, that guarantee the the laplacian of the Ricci potential is bounded from below.
\end{proof}

Having proved the bounds on the complex hessian of the solution, we can now turn to proving the H\"older continuity of the second derivative using a version of the so-called Evans-Krylov theory to deduce that the solution \(\phi_0\), obtained as the limit of \(\{\phi_\eps \}_\eps \), belongs to the edge H\"older space \(C^{2,\tta}_\ta\) for some exponent \(\tta\). This will be the the subject of the next section.

\subsection{Change of the reference metric redux: Evans-Krylov theory} \label{ev-kr-sec}
	
	The \(C^{2,\tta}_\ta\) estimates have been studied before. In \cite{ch-do-su-3} such an estimate has been derived for K\"ahler-Einstein metrics. 
Also, in \cite{ca-zh} the Evans-Krylov theory has been extended to the edge-cone setting with the restriction of \(\ta \leq {2 \over 3}\), and in \cite{je-ma-ru} for \(\ta \in (0,1)\).
Later, an interesting argument based on approximating the cone angle by rational numbers and using the geometry of rational cone angles was developed in \cite{gu-pa}.
We assume the validity Evans-Krylov theory on \(\bbC_\ta \times \bbC^{n-1}\), which is what has been established in \cite{ca-zh, je-ma-ru, chu, gu-pa}.

So, we first localise the equation as detailed in the next paragraph.
In the backdrop there is indeed a local change of the background metric which allows considering the equation on the flat edge model.
This is necessary since in the edge case, unlike the smooth case, the second derivatives of the reference metric might not be bounded, so taking derivatives of \eqref{eq-1-4}
in its current form might not help with the proof.

In the case of the edge metrics, the equation that the metric satisfies is the following 
\[
	\ro(\om_\phi) = \m \om_\phi +2 \pi (1- \ta) [D].\] 
In the unit ball in \(\bbC_\ta \times \bbC^{n-1}\) the twisted K\"ahler-Einstein equation can be written as
\begin{equation}
\label{cone-local}
	-dd^c \log (dd^c w)^n =  dd^c \log |\fz_1|^{2 - 2 \ta} + \m dd^c w
\end{equation}
Similar to the smooth case, we obtain the following equation for \(w\):
\begin{equation}
\label{cone-local-2}
	\log \det (w_{,\al \bbe}) = \log |\fz_1|^{2\ta - 2} - \m w + \fH
\end{equation}
for some pluri-harmonic function \(\fH\).

Of course since we have no boundary conditions, there are infinitely many choices of a pluri-harmonic function \(\fH\) which in general satisfy no uniformity of any sort. 
Noting the  fact that this equation is satisfied locally by all K\"ahler-Einstein potentials,
it comes as little surprise that with no boundary conditions prescribed and an undetermined source term, \(\fH\), Equation \ref{cone-local-2} has too many degrees of freedom. 
It might, at the face value, seem like by doing so we have lost a great deal of information.
However, when one  has readily obtained a bound on the complex hessian of the solution \(\phi\) on the manifold, it translates to the fact that \(dd^c w\) can be assumed to be bounded. 
Further, since \(\phi\), and hence \(w\), are bounded in \(L^\infty\), along with the fact that the form \(dd^c w\) is bounded, we see from the equation that \(\Vert \fH \Vert_{L^\infty}\) is {\em a priori} bounded.
Then, since \(\fH\) is a pluri-harmonic, and in particular a harmonic function, it is a well-known fact  \(\fH\) is analytic and  that all the derivatives of \(\fH\) are controlled in terms of the oscillation of \(\fH\). 
(One way to prove this fact is by an application of the mean value theorem. For such a proof, see \cite{gi-tr}).
Although, since we take \(\del_{\ka \bla}\) derivatives, and \(\fH_{,\ka \bla} =0\), we only need the bound on the first derivative of \(\fH\).


Now, in the global equation, \eqref{eq-1-4}, the Ricci potential, \(f\), is bounded, and in our local equation \(\om_\ta\) is equivalent to the standard edge-cone metric, \(\om_{\fK}\). This, along with the fact that  the potential \(w\) is bounded
allows us to immediately see that  the oscillation of \(\fH\) is bounded, which, in turn, gives a bound on all higher derivatives of \(\fH\). We can summarise these observations as the following lemma:

Now one may apply the Evans-Krylov theory on the flat space to \eqref{cone-local-2} and establish the membership of \(w\), and hence \(\phi\), in \(C^{2,\tta}_\ta\) for some \(\tta \in (0,1)\).

In the case of prescribed Ricci form in Theorem \ref{thm-1-2} there is another consideration to be taken into account, namely the regularity of the Ricci potential when the problem is localied. 
It is not hard to see that the Ricci potential stays H\"older continuous after the change of the background metric and also after localising the problem.
Assuming the H\"older continuity of the Ricci potential when the equation is localised with the falt reference metric, we can apply the result in 
\cite{chu}. 
Amongst the Evans-Krylov estimates derived in the edge-cone setting, the result in this reference is the only one which does not require differentiating the equation, and hence, does not require the boundedness of derivatives of the Ricci potential. 
We may summarise this as it is stated below.

\begin{theorem}
	Assume that in Equation \ref{eq-1-4} the function \(\phi\) satisfies:
		\[
			\Vert \phi \Vert_{L^\infty}, \
			\Vert \vert dd^c \phi \vert_{\om_\ta} \Vert_{L^\infty},
			\
			\Vert f \Vert_{C^\al_\ta}
			 \leq C
		\]
		for some constant \(C>0\).
\end{theorem}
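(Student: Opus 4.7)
The plan is to derive the $C^{2,\tta}_\ta$ bound by localising the equation on the flat edge model $\bbC_\ta \times \bbC^{n-1}$, and then invoking the Evans--Krylov type result of \cite{chu}. First I would cover the manifold by finitely many coordinate charts. On each chart that meets the divisor $D$ transversally I pass to the local coordinates of Lemma \ref{lemma-ti-ya}, in which the hermitian potential of $h$ is normalised so that $|s|_h^{2\ta}$ is comparable to $|\fz_1|^{2\ta}$ up to a smooth factor whose derivatives are uniformly controlled. This allows rewriting the global equation \eqref{eq-1-4}, after absorbing the reference metric into the unknown, as the localised equation
\begin{equation*}
	\log \det (w_{,\al \bbe}) = \log |\fz_1|^{2\ta - 2} - \m w + \fH,
\end{equation*}
for a local K\"ahler potential $w = |\fz_1|^{2\ta} + (\text{quadratic part}) + \phi$ and some pluri-harmonic remainder $\fH$ absorbing the Ricci potential $f$ and the background contributions.

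Next I would control $\fH$. Since the hypotheses give uniform $L^\infty$ bounds on $\phi$, on $|dd^c \phi|_{\om_\ta}$, and the $C^\al_\ta$ bound on $f$, the left-hand side $\log\det(w_{,\al\bbe})$ is uniformly bounded, the term $\log|\fz_1|^{2\ta-2}$ is explicit, and $\m w$ is uniformly bounded. Hence $\fH$ is bounded in $L^\infty$ independently of the approximation parameter. Being pluri-harmonic, and in particular harmonic with respect to the flat euclidean metric on the cover, $\fH$ is real-analytic and its oscillation on a shrunk ball controls every derivative, by the mean-value property (cf.\ \cite{gi-tr}). In particular the term $\fH$ contributes only smooth, uniformly bounded data to the Monge--Amp\`ere equation on $\bbC_\ta \times \bbC^{n-1}$.

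With the localised equation in hand, I would then apply the Evans--Krylov theorem in the form proved by Chu \cite{chu} for edge-cone Monge--Amp\`ere equations: this version does \emph{not} require differentiating the equation, and therefore only asks for the H\"older continuity of the right-hand side, which is available since $f \in C^\al_\ta$ and the pluri-harmonic correction is smooth. This produces a uniform local $C^{2,\tta}_\ta$ bound on $w$ for some $\tta = \tta(n,\ta,\al)>0$. Transferring back, $\phi = w - |\fz_1|^{2\ta} - (\text{smooth})$ inherits the same membership, and the charts away from $D$ are handled by standard interior Evans--Krylov. A covering/patching argument yields the global $C^{2,\tta}_\ta$ estimate.

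The main obstacle I anticipate is the bookkeeping around the pluriharmonic function $\fH$: one must ensure that after localisation the right-hand side of \eqref{cone-local-2} lies in $C^\al_\ta$ uniformly, which is precisely why the \emph{good reference metric} of Proposition \ref{good-metric-pro} was introduced—it guarantees that the Ricci potential has a lower bound on its Laplacian, and more importantly is $C^\al_\ta$-controlled after the change of variables to the flat edge model. The secondary difficulty is that the a priori bounds on $\phi$ and $dd^c \phi$ are given with respect to $\om_\ta$ rather than the flat edge K\"ahler form $\om_{\fK}$; but the equivalence of these metrics in $C^\al_\ta$ norm, established in \textsection \ref{good-metric-sec}, ensures that the localised $w$ carries the same bounds, so that Chu's hypotheses are satisfied uniformly in the approximation parameter $\eps$.
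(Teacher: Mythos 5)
Your proposal follows essentially the same route as the paper: localise \eqref{eq-1-4} on the flat model \(\bbC_\ta\times\bbC^{n-1}\) to obtain \eqref{cone-local-2}, bound the pluriharmonic remainder \(\fH\) in \(L^\infty\) using the assumed bounds on \(\phi\), \(dd^c\phi\) and \(f\), control its derivatives via the mean-value property, and then invoke Chu's Evans--Krylov theorem \cite{chu}, which requires only H\"older continuity of the right-hand side rather than differentiating the equation. The extra bookkeeping you add (covering/patching and the equivalence of \(\om_\ta\) with the flat edge metric \(\om_{\fK}\)) is exactly what the paper relies on implicitly, so there is no substantive divergence.
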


We notice that the condition on the H\"older continuity of \(f\) is satisfied in particular when the prescribed Ricci form, \(\tilde{\varrho}\), can be locally realised by H\"older continuous potentials, which is one of the conditions is Theorem \ref{thm-1-1} part (a).

\end{document}